\newtheorem{theorem}{Theorem}[section]
\newtheorem{corollary}[theorem]{Corollary}
\newtheorem{proposition}[theorem]{Proposition}
\theoremstyle{definition}
\newtheorem{remark}[theorem]{Remark}
\newtheorem{example}[theorem]{Example}
\newtheorem{question}[theorem]{Question}
\newcommand{\Hom}{\operatorname{Hom}}
\newcommand{\Spec}{\operatorname{Spec}}
\def\ZZ{{\mathbf Z}}
\def\AA{{\mathbf A}}
\def\cO{\mathcal{O}}
\def\fra{\mathfrak{a}}
\def\frm{\mathfrak{m}}
\newcommand{\ord}{\textnormal{ord}}
\newcommand{\lct}{\textnormal{lct}}
\newcommand{\mld}{\textnormal{mld}}
\newcommand{\codim}{\textnormal{codim}}
\newcommand{\ilim}{\mathop{\varinjlim}\limits}
    \newcommand{\plim}{\mathop{\varprojlim}\limits}
\newcommand{\llbracket}{[\negthinspace[}
\newcommand{\rrbracket}{]\negthinspace]}
\title[The dimension of jet schemes of singular varieties]{The dimension of jet schemes of singular varieties}
\author[Mircea Musta\c{t}\u{a}]
{Mircea Musta\c{t}\u{a}\thanks{The author was partially supported by NSF grants
DMS-1068190 and DMS-1265256.}}
\begin{document}

\begin{abstract}
Given a scheme $X$ over $k$, a generalized jet scheme parametrizes maps $\Spec A\to X$, where
$A$ is a finite-dimensional, local algebra over $k$. We give an overview of known results concerning the 
dimensions of these schemes for $A=k[t]/(t^m)$, when they are related to 
invariants of singularities in birational geometry. We end with a discussion of more general jet schemes.

\end{abstract}

\begin{classification}
Primary 14E18; Secondary 14B05 .
\end{classification}

\begin{keywords}
Jet scheme, log canonical threshold, minimal log discrepancy.
\end{keywords}

\maketitle

\section{Introduction}

Given a scheme $X$ (say, of finite type over an algebraically closed field $k$), a tangent vector to $X$ can be identified with
 a morphism $\Spec k[t]/(t^2) \to X$. The tangent vectors to $X$ are the closed points of the first jet scheme $J_1(X)$ of $X$. More generally,
for every $m\geq 1$ one can define a scheme of finite type $J_m(X)$ whose closed points parametrize all morphisms
$\Spec k[t]/(t^{m+1}) \to X$. Explicitly, if $X$ is defined in some affine space $\AA^N$ by polynomials $f_1,\ldots,f_r$,
then as a set $J_m(X)$ is identified to the set of solutions of $f_1,\ldots,f_r$ in $k[t]/(t^{m+1})$. 
Truncating induces natural maps $J_p(X)\to J_q(X)$ for every $p>q$. When $X$ is a smooth, $n$-dimensional variety, then every such projection is
locally trivial in the Zariski topology, with fiber $\AA^{(p-q)n}$. In particular, $J_m(X)$ is a smooth variety of dimension $(m+1)n$. However, when $X$ is singular,
the jet schemes $J_m(X)$ have a more complicated behavior that reflects in a subtle way the singularities of $X$. 

In fact, instead of the algebras $k[t]/(t^{m+1})$ we can consider an arbitrary local, finite $k$-algebra $A$. The morphisms
$\Spec(A)\to X$ are parametrized by a generalized jet scheme that we denote $J_A(X)$. 
For example, if 
$$A=k[t_1,\ldots,t_r]/(t_1^{m_1+1},\ldots,t_r^{m_r+1}),$$ then $J_A(X)$ is isomorphic to the $r$-iterated jet scheme
$J_{m_1}(J_{m_2}(\ldots J_{m_r}(X)))$. 
The construction and the formal properties
of these more general jet schemes are very similar to those of the usual ones. We give an overview of the construction and of the basic properties
of the generalized jet schemes in \S 2. By taking suitable projective limits, one can then define $J_A(X)$ when $A$ is a local, complete $k$-algebra, with residue field $k$.
We describe this construction in \S 3. The much-studied case is that of $A=k\llbracket t\rrbracket$, when $J_A(X)$ is known as the scheme of arcs of $X$. 

Information on the schemes $J_m(X)$  is provided by the change of variable formula in motivic integration, see \cite{DL99}. 
More precisely, if $X$ is embedded in a smooth variety $Y$, then one can  use a log resolution of the pair $(Y,X)$ to compute, for example, the dimensions of the schemes $J_m(X)$
in terms of the data of the resolution. In this way one can relate the behavior of these dimensions to some of the invariants of the pair $(Y,X)$ that appear in birational geometry.
We describe this story in \S 4. 

One can ask questions with a similar flavor about the dimensions of the generalized jet schemes $J_A(X)$, but very little is known in this direction. We propose in \S 5
some invariants defined in terms of the asymptotic behavior of the dimensions of $J_A(X)$, when $A$ varies over certain sequences of algebras of embedding dimension $2$.
We also discuss the irreducibility of iterated jet schemes for locally complete intersection varieties.

\section{Generalized jet schemes}

Let $k$ be a field of arbitrary characteristic. 
All schemes we consider are schemes over $k$.
Let ${\rm LFA}/k$ be the category whose objects are local finite $k$-algebras, with residue field $k$,
with the maps being local homomorphisms of $k$-algebras.
Given a scheme $X$ 
 and $A\in {\rm LFA}/k$, 
the scheme of $A$-jets of $X$ is a scheme $J_A(X)$ that satisfies the following universal property: for every scheme $Y$, there is a functorial bijection
\begin{equation}\label{eq_def1}
\Hom_{{\rm Sch}/k}(Y,J_A(X))\simeq \Hom_{{\rm Sch}/k}(Y\times\Spec A,X).
\end{equation}
Standard arguments imply that it is enough to have a functorial bijection
as in (\ref{eq_def1}) when $Y$ is an affine scheme. In particular, by taking $Y=\Spec k$, we see that
the set of $k$-valued points of $J_A(X)$ is in bijection with the set of $A$-\emph{jets} of $X$, that is, $A$-valued points of $X$. 
Note that if $A=k$, then $J_A(X)=X$.

Before discussing existence, we make some general remarks. Suppose that  $\phi\colon A\to B$ is a homomorphism in ${\rm LFA}/k$ and that $J_A(X)$ and $J_B(X)$ exist. We have a functorial transformation 
$$ \Hom_{{\rm Sch}/k}(Y\times\Spec A,X)\to \Hom_{{\rm Sch}/k}(Y\times\Spec B,X)$$
given by composition with $Y\times \Spec B\to Y\times\Spec A$. This is
induced via the isomorphism (\ref{eq_def1}) by a unique scheme morphism 
$\pi^X_{B/A}\colon J_A(X)\to J_B(X)$. In particular, if $A\in {\rm LFA}/k$, then the 
morphism to the residue field $A\to k$ 
induces
$\pi^X_A=\pi^X_{A/k}\colon J_A(X)\to X$. Similarly, the structure morphism
$k\to A$ induces a section $s^X_A\colon X\to J_A(X)$ of $\pi_A^X$.
Given two morphisms  $A\to B$ and $B\to C$ in ${\rm LFA}/k$, if
$J_A(X)$, $J_B(X)$, and $J_C(X)$ exist, then it is easy to see that $\pi^X_{C/B}\circ \pi^X_{B/A}=\pi^X_{C/A}$. 
When the scheme $X$ is clear from the context, we simply write $\pi_{B/A}$,
$\pi_A$, and $s_A$ instead of $\pi^X_{B/A}$, $\pi^X_A$, and $s_A^X$.

When $A=k[t]/(t^{m+1})$, the $A$-jets are called $m$-jets and the corresponding scheme is denoted $J_m(X)$, the
$m^{\rm th}$ \emph{jet scheme} of $X$. It is not hard to deduce from the universal property that $J_1(X)$
is isomorphic as a scheme over $X$ with the total tangent space ${\mathcal Spec}({\rm Sym}(\Omega_{X/k}))$. 
We now sketch the proof of the existence of $J_A(X)$. The argument is the same as in
the case of the usual jet schemes, hence we refer the reader to \cite[Section 2]{EM09} for details. 

Consider first the case when $X=\Spec S$ is affine and consider a presentation 
$$S\simeq k[x_1,\ldots,x_N]/(f_1,\ldots,f_r).$$
Let us fix a basis $(e_i)_{1\leq i\leq m}$ for $A$. We can thus write 
\begin{equation}\label{eq2_adjoint_jets}
e_i\cdot e_j=\sum_{\ell=1}^mc_{i,j,\ell}e_{\ell}.
\end{equation}
We consider an affine scheme $Y=\Spec R$. 
Giving a morphism $Y\times\Spec A \to X$ is equivalent to giving a ring homomorphism
\begin{equation}\label{eq3_adjoint_jets}
\phi\colon k[x_1,\ldots,x_N]/(f_1,\ldots,f_r)\to R\otimes_kA.
\end{equation}
This is uniquely determined by $\phi(x_j)$, for $1\leq j\leq N$, which can be written as
$$\phi(x_j)=\sum_{i=1}^ma_{i,j}\otimes e_i,\,\,\text{with}\,\,a_{i,j}\in R\,\,\text{for}\,\,1\leq i\leq m,\,1\leq j\leq N.$$
Furthermore, for any such choice of $\phi(x_j)$, we get a $k$-algebra homomorphism
$\widetilde{\phi}\colon k[x_1,\ldots,x_N]\to R\otimes_kA$ and this induces a $k$-algebra homomorphism 
(\ref{eq3_adjoint_jets}) if and only if $\widetilde{\phi}(f_{\alpha})=0$ for $1\leq\alpha\leq r$. 
On the other hand, the relations (\ref{eq2_adjoint_jets}) imply that for every $\alpha$, we can find
polynomials $P_{\alpha}^{(i)}$ in $a_{j,\ell}$, with coefficients in $k$ (these coefficients in turn are polynomials in the structure constants
$c_{i,j,\ell}$) such that for $\widetilde{\phi}$ as above, we have
$$\widetilde{\phi}(f_{\alpha})=\sum_{i=1}^m P_{\alpha}^{(i)}(a_{1,1},\ldots,a_{m,N})\otimes e_i.$$
This shows that $J_{A}(X)$ is cut out in $\AA^{mN}$ by the equations $P_{\alpha}^{(i)}$ for $1\leq\alpha\leq r$ and $1\leq i\leq m$.

The above argument shows that $J_A(X)$ exists whenever $X$ is affine. It is then easy to check that if
$X$ is any scheme such that $J_A(X)$ exists, then for every open subset $U$ of $X$, the scheme $J_A(U)$ exists
and it is isomorphic to $(\pi^X_A)^{-1}(U)$. Given now an arbitrary scheme $X$, consider an affine open cover $X=\cup_iU_i$.
Note that $J_A(U_i)$ exists for every $i$. Moreover, for every $i$ and $j$, the schemes $(\pi_A^{U_i})^{-1}(U_i\cap U_j)$ and 
$(\pi_A^{U_j})^{-1}(U_i\cap U_j)$ are canonically isomorphic, being isomorphic to $J_A(U_i\cap U_j)$. We can thus glue
the schemes $J_A(U_i)$ along these open subsets and it is then straightforward to check that the resulting scheme satisfies the
universal property of $J_A(X)$. We collect in the next proposition the conclusion of the above discussion.

\begin{proposition}\label{prop_existence_jet_scheme}
For every $A\in {\rm LFA}/k$ and every scheme $X$, the scheme $J_A(X)$ of $A$-jets of $X$ exists. Moreover, the following hold:
\begin{enumerate}
\item[i)] If $X$ is of finite type over $k$, then $J_A(X)$ is of finite type over $k$.
\item[ii)] The canonical projection $\pi_A\colon J_A(X)\to X$ is affine. 
\item[iii)] If $X$ is an affine subscheme of ${\mathbf A}^N$ defined by $r$ equations and $\dim_k(A)=m$, then
$J_A(X)$ is defined in $J_A({\mathbf A}^N)\simeq {\mathbf A}^{Nm}$ by $rm$ equations. 
More generally, if $A\to A'$ is a surjective homomorphism in ${\rm LFA}/k$, then
$$J_A(X)\hookrightarrow (\pi^{\AA^N}_{A'/A})^{-1}(J_{A'}(X))$$
is cut out by $r\cdot (\dim_k(A)-\dim_k(A'))$ equations.
\end{enumerate}
\end{proposition}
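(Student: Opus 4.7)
The plan is to extract all three statements from the explicit affine construction carried out in the paragraphs preceding the proposition. For (i), if $X=\Spec S$ is affine of finite type, the construction displays $J_A(X)$ as a closed subscheme of $\AA^{mN}$ cut out by finitely many polynomials, so it is of finite type; for an arbitrary $X$ of finite type, I would cover $X$ by finitely many affine opens $U_i$ and glue the (finite-type) schemes $J_A(U_i)$. For (ii), the same construction shows that $\pi_A^{-1}(U)=J_A(U)$ is affine for every affine open $U\subset X$, and since such $U$ cover $X$, the map $\pi_A$ is affine. The first assertion of (iii) is again immediate from the construction: the $rm$ polynomials $P_\alpha^{(i)}$ cut out $J_A(X)$ in $J_A(\AA^N)\simeq \AA^{mN}$.

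For the general assertion in (iii), the plan is to choose a basis of $A$ compatibly with the surjection $A\to A'$ so that the projection $\pi^{\AA^N}_{A'/A}$ becomes a coordinate projection. Set $I:=\Ker(A\to A')$ and $s:=\dim_k(A)-\dim_k(A')=\dim_k(I)$, and pick a basis $(e_i)_{1\leq i\leq m}$ of $A$ such that $e_{m-s+1},\ldots,e_m$ form a basis of $I$ while $e_1,\ldots,e_{m-s}$ reduce modulo $I$ to a basis of $A'$. In these coordinates, $\pi^{\AA^N}_{A'/A}\colon \AA^{mN}\to\AA^{(m-s)N}$ becomes the linear projection that forgets the variables $a_{k,j}$ with $k>m-s$.

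The heart of the argument is the claim that, for each $\alpha\in\{1,\ldots,r\}$ and each $i\in\{1,\ldots,m-s\}$, the polynomial $P_\alpha^{(i)}$ does not involve any $a_{k,j}$ with $k>m-s$, and, viewed as a polynomial in the variables $a_{k,j}$ with $k\leq m-s$, it agrees with the polynomial labelled by $(\alpha,i)$ in the analogous presentation of $J_{A'}(X)$ in $\AA^{(m-s)N}$ (using the induced basis of $A'$). I would verify this by splitting $\phi(x_j)=u_j+v_j$ with $u_j=\sum_{k\leq m-s}a_{k,j}\otimes e_k$ and $v_j\in R\otimes I$: since $I$ is an ideal of $A$, all terms in $\widetilde{\phi}(f_\alpha)-f_\alpha(u_1,\ldots,u_N)$ lie in $R\otimes I$, so the $e_i$-coefficients of both sides agree for $i\leq m-s$, and these coefficients coincide with the ones produced by the $A'$-side construction after identifying $\overline{e_i}$ with $e_i$. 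Granting this, $(\pi^{\AA^N}_{A'/A})^{-1}(J_{A'}(X))$ is cut out in $\AA^{mN}$ by the $r(m-s)$ equations $P_\alpha^{(i)}$ with $i\leq m-s$, and $J_A(X)$ is then further cut out inside it by the remaining $r\cdot s=r(\dim_k(A)-\dim_k(A'))$ equations $P_\alpha^{(i)}$ with $m-s+1\leq i\leq m$. The only real obstacle I anticipate is the compatible basis choice together with the verification that the lower-index polynomials match on the two sides; everything else is a direct reading of the construction.
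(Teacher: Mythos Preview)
Your proposal is correct and follows the same route as the paper, which simply collects the conclusions of the explicit affine construction preceding the proposition. In fact you supply more detail than the paper does: the paper does not spell out the compatible-basis argument for the ``more generally'' clause in (iii), whereas your splitting $\phi(x_j)=u_j+v_j$ with $v_j\in R\otimes I$ and the observation that $I$ is an ideal cleanly justifies why the $e_i$-coefficients for $i\leq m-s$ are unaffected by the variables $a_{k,j}$ with $k>m-s$ and coincide with the $A'$-side equations.
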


In what follows we discuss some basic properties of generalized jet schemes.
It is clear that if $f\colon X\to Z$ is a morphism and $A\in {\rm LFA}/k$, we have a unique morphism
$J_A(f)\colon J_A(X)\to J_A(Z)$ such that the bijection (\ref{eq_def1}) is functorial also in
 $X$. Therefore taking $X$ to $J_A(X)$ gives a functor, such that if $A\to B$ is a morphism of finite 
 $k$-algebras, then $\pi_{B/A}$ is a natural transformation.
 
 \begin{example}\label{ex_iterated}
 Iterated jet schemes can be described as schemes of $A$-jets. Indeed, given 
 $m_1,\ldots,m_r\in\ZZ_{\geq 0}$, if we take $A=k[t_1,\ldots,t_r]/(t_1^{m_1+1},\ldots,t_r^{m_r+1})$, then it follows from the universal property that for every scheme $X$, we have a canonical isomorphism of schemes over 
 $X$
 $$J_A(X)\simeq J_{m_1}(J_{m_2}(\ldots J_{m_r}(X))).$$
 \end{example}
 
 \begin{remark}
 It follows from the explicit description of the scheme of $A$-jets of an affine scheme $X$ that if $\iota\colon Z\hookrightarrow X$ is a closed immersion,
 then for every $A\in {\rm LFA}/k$, the induced morphism $J_A(\iota)$ is a closed immersion.
 \end{remark}
 
\begin{remark}\label{field_extension}
If $X$ is any scheme over $k$ and $K/k$ is a field extension, for every 
$A\in {\rm LFA}/k$ we have $A\otimes_kK\in {\rm LFA}/K$ and there is a canonical isomorphism 
$$J_{A\otimes_kK}(X\times_{\Spec k}\Spec K)\simeq J_A(X)\times_{\Spec k}\Spec K.$$
The assertion follows easily from the isomorphism (\ref{eq_def1}).
\end{remark}

The following proposition describing the behavior of generalized jet schemes with respect to \'{e}tale morphisms is an 
immediate consequence of (\ref{eq_def1}) and of the fact that \'{e}tale morphisms are formally \'{e}tale.

\begin{proposition}\label{general_properties3}
If $f\colon X\to Y$ is an \'{e}tale morphism of schemes of finite type over $k$, then for every $A\in {\rm LFA}/k$ we have a Cartezian diagram
\[
\begin{CD}
J_A(X) @>{J_A(f)}>> J_A(Y) \\
@V{\pi_A^X}VV@VV{\pi_A^Y}V \\
X@>{f}>>  Y.
\end{CD}
\]
\end{proposition}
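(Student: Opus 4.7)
The plan is to verify the Cartesian property via Yoneda: I will show that for every $k$-scheme $T$, the natural comparison map
\[
\Hom(T, J_A(X)) \longrightarrow \Hom(T, X) \times_{\Hom(T, Y)} \Hom(T, J_A(Y))
\]
is a bijection. Both sides get unwound using the universal property (\ref{eq_def1}), which converts the assertion into an infinitesimal lifting problem for $f$ along the canonical closed immersion $\iota\colon T \hookrightarrow T \times \Spec A$ induced by the residue map $A \to k$. Since $A$ is a local finite $k$-algebra with residue field $k$, the maximal ideal $\frm_A$ is nilpotent, and hence $\iota$ is a nilpotent thickening.

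Spelling this out: by (\ref{eq_def1}), morphisms $T\to J_A(X)$ correspond to morphisms $\tilde g\colon T\times\Spec A\to X$, and similarly for $J_A(Y)$; under these correspondences, $\pi_A^X$ and $\pi_A^Y$ identify with precomposition by $\iota$, while $J_A(f)$ identifies with postcomposition by $f$. Thus the right-hand side above is the set of pairs $(g,\tilde h)$ with $g\colon T\to X$ and $\tilde h\colon T\times\Spec A\to Y$ subject to $\tilde h\circ\iota=f\circ g$, and the comparison map sends $\tilde g$ to $(\tilde g\circ\iota,\,f\circ\tilde g)$.

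To say that this map is bijective is precisely to say that, given any commutative square with $g$ and $\tilde h$ as above, there exists a unique $\tilde g\colon T\times\Spec A\to X$ with $\tilde g\circ\iota=g$ and $f\circ\tilde g=\tilde h$. This is exactly the unique infinitesimal lifting property for $f$ against the nilpotent closed immersion $\iota$. Since $f$ is étale it is formally étale, so this lifting exists and is unique (one may, if desired, obtain it by iterating the square-zero lifting property along the filtration $A\supset\frm_A\supset\frm_A^2\supset\cdots$, which terminates because $\frm_A$ is nilpotent). The only real work is the bookkeeping in the middle paragraph—correctly translating the Cartesian condition into a lifting problem against $\iota$—after which formal étaleness closes the argument immediately.
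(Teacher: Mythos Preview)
Your argument is correct and is exactly what the paper has in mind: it states (without writing out details) that the proposition is an immediate consequence of the universal property (\ref{eq_def1}) together with the fact that \'etale morphisms are formally \'etale. Your Yoneda reduction to the unique lifting along the nilpotent thickening $T\hookrightarrow T\times\Spec A$ is precisely that immediate consequence spelled out.
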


Using Proposition~\ref{general_properties3} and the description of the scheme of $A$-jets
for an affine space, we obtain the following

\begin{corollary}\label{smooth_case}
If $X$ is a smooth $n$-dimensional variety\footnote{A \emph{variety} is assumed to be reduced, irreducible, and of finite type over $k$.} over $k$, then for every surjective
morphism $A\to B$ in ${\rm LFA}/k$, the induced morphism 
$J_A(X)\to J_B(X)$ is locally trivial in the Zariski topology, with fiber
$\AA^{dn}$, where $d=\dim_k(A)-\dim_k(B)$. In particular, $J_A(X)$ is a smooth variety
of dimension $n\cdot \dim_k(A)$. 
\end{corollary}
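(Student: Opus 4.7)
The strategy is to reduce to the case $X=\AA^n$ by exploiting that a smooth $n$-dimensional variety is, Zariski-locally, \'etale over $\AA^n$, and then to combine this with Proposition~\ref{general_properties3}.

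First I would settle the case $X=\AA^n$. From the explicit construction in the proof of Proposition~\ref{prop_existence_jet_scheme}, applied to $k[x_1,\ldots,x_n]$ (no defining equations), any choice of $k$-basis $(e_i)$ of $A$ identifies $J_A(\AA^n)$ with $\AA^{n\dim_k(A)}$, the coordinates being the $a_{i,j}$. Given a surjection $A\to B$ in ${\rm LFA}/k$, a $k$-linear splitting yields a decomposition $A\simeq B\oplus C$ with $\dim_k(C)=d$. Reading this decomposition through the construction gives a canonical isomorphism
$$J_A(\AA^n)\simeq J_B(\AA^n)\times_k\AA^{nd},$$
under which $\pi_{B/A}^{\AA^n}$ becomes the first projection. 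So the corollary holds globally for $X=\AA^n$.

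Next I would transfer this to a general smooth $X$. Since $X$ is smooth of dimension $n$, there is a Zariski open cover $X=\bigcup_i U_i$ such that each $U_i$ admits an \'etale morphism $\phi_i\colon U_i\to\AA^n$ (via local uniformizers). Proposition~\ref{general_properties3} turns these into Cartesian squares
$$J_A(U_i)\simeq U_i\times_{\AA^n}J_A(\AA^n),\qquad J_B(U_i)\simeq U_i\times_{\AA^n}J_B(\AA^n),$$
and identifies $J_A(U_i)\to J_B(U_i)$ with the base change of $\pi_{B/A}^{\AA^n}$ along $\phi_i$. Pulling back the trivialization from the previous paragraph yields $J_A(U_i)\simeq J_B(U_i)\times_k\AA^{nd}$, with the projection to $J_B(U_i)$ being the first factor. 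This is exactly Zariski-local triviality of $J_A(X)\to J_B(X)$ with fiber $\AA^{nd}$.

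Finally, specializing to $B=k$ gives $J_B(X)=X$ and $d=\dim_k(A)-1$, so $J_A(X)\to X$ is Zariski-locally trivial with fiber $\AA^{n(\dim_k(A)-1)}$; hence $J_A(X)$ is smooth of dimension $n\cdot\dim_k(A)$. The only mildly subtle point is insisting on Zariski (rather than merely \'etale) local triviality, but this is automatic here because the trivialization over $\AA^n$ arises from a global splitting of a surjection of finite-dimensional $k$-vector spaces, which is preserved by any base change.
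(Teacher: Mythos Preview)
Your argument is correct and is exactly the approach the paper indicates: the paper does not give a proof, but simply says the corollary follows from Proposition~\ref{general_properties3} together with the explicit description of $J_A(\AA^n)$, and you have faithfully unpacked those two ingredients. One very minor remark: the ``In particular'' clause asserts that $J_A(X)$ is a \emph{variety}, hence in particular irreducible; you might add a word noting that a Zariski-locally trivial bundle with irreducible fiber over the irreducible base $X$ is irreducible.
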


\begin{corollary}\label{bound_dimension_jet_schemes}
If $X$ is a scheme of finite type over $k$, then for every
$A\in {\rm LFA}/k$, we have 
$$\dim(X)\leq\frac{\dim(J_A(X))}{\dim_k(A)}\leq \max_{x\in X}\dim(T_xX).$$
\end{corollary}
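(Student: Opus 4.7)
My plan is to prove the two inequalities separately, in both cases first reducing to $k=\overline k$ via Remark~\ref{field_extension}: dimensions of finite-type $k$-schemes and of finite $k$-algebras are unchanged by base change, and the maximum of $\dim(T_xX)$ is attained at closed points (by upper semicontinuity of the embedding dimension), where tangent spaces transform linearly under base extension.

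For the lower inequality, let $d=\dim X$ and pick an irreducible component $Z\subseteq X$ with $\dim Z=d$. Since $\overline k$ is perfect, generic smoothness produces a dense open subset $U\subseteq Z_{\mathrm{red}}$ that is smooth of dimension $d$. Corollary~\ref{smooth_case} then yields $\dim J_A(U)=d\cdot\dim_k A$. The open immersion $U\hookrightarrow X_{\mathrm{red}}$ induces an open immersion $J_A(U)=(\pi_A^{X_{\mathrm{red}}})^{-1}(U)\hookrightarrow J_A(X_{\mathrm{red}})$, and the closed immersion $X_{\mathrm{red}}\hookrightarrow X$ induces a closed immersion $J_A(X_{\mathrm{red}})\hookrightarrow J_A(X)$, giving $\dim J_A(X)\ge d\cdot\dim_k A$, i.e., the left-hand inequality after dividing by $\dim_k A$.

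For the upper inequality, set $E=\max_{x\in X}\dim(T_xX)$; since at any closed point $x$ of a maximal-dimension component, $\dim(T_xX)\ge\dim\cO_{X,x}=\dim X$, we have $\dim X\le E$. Using the standard fiber-dimension bound for the morphism of finite type $\pi_A\colon J_A(X)\to X$,
\[
\dim J_A(X)\le\dim X+\sup_{x\in X}\dim\pi_A^{-1}(x),
\]
it suffices to show $\dim\pi_A^{-1}(x)\le(\dim_k A-1)\cdot\dim(T_xX)$. By upper semicontinuity of fiber dimension the supremum is attained at a closed (hence $k$-rational) point $x$; choose a minimal generating set $f_1,\ldots,f_N$ of $\frm_x\subset\cO_{X,x}$, so that $N=\dim(T_xX)$. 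An $A$-jet of $X$ centered at $x$ is a local $k$-algebra homomorphism $\cO_{X,x}\to A$, which factors through $\cO_{X,x}/\frm_x^{n+1}$ for $n$ with $\frm_A^{n+1}=0$ and is uniquely determined by the images of $f_1,\ldots,f_N$ in $\frm_A$, a $k$-vector space of dimension $\dim_k A-1$. Hence $\pi_A^{-1}(x)$ is a closed subscheme of an affine space of dimension $N(\dim_k A-1)$, and combining with $\dim X\le E$ yields $\dim J_A(X)\le E+E(\dim_k A-1)=E\cdot\dim_k A$. The main obstacle is the fiber bound at a closed point: one must both justify reducing to a closed, rational point (via upper semicontinuity for finite-type morphisms together with the base-change reduction) and observe that a local $k$-algebra map out of $\cO_{X,x}$ is controlled by a minimal generating set of $\frm_x$; once these are in place, the inequality is immediate.
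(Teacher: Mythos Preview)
Your argument is correct. For the lower bound you do exactly what the paper does: pass to a smooth locally closed subvariety of maximal dimension and invoke Corollary~\ref{smooth_case}.

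For the upper bound your route differs from the paper's. The paper observes that for each point $x$ with $n=\dim(T_xX)$ there is an open neighborhood $U\ni x$ and a closed immersion $U\hookrightarrow Z$ with $Z$ smooth of dimension $n$; then $J_A(U)\hookrightarrow J_A(Z)$ and Corollary~\ref{smooth_case} gives $\dim J_A(U)\le n\cdot\dim_k(A)$ directly. You instead split via the projection $\pi_A$, using $\dim J_A(X)\le\dim X+\sup_x\dim\pi_A^{-1}(x)$, and then bound the fiber by $(\dim_k A-1)\dim(T_xX)$. This works, but note that the step ``hence $\pi_A^{-1}(x)$ is a closed subscheme of an affine space of dimension $N(\dim_kA-1)$'' is really a scheme-theoretic statement, not just a statement about $k$-points: to justify it cleanly you should lift your minimal generators $f_1,\dots,f_N$ to an affine neighborhood so that $U\hookrightarrow\AA^N$ with $x\mapsto 0$, whereupon the fiber sits inside $(\pi_A^{\AA^N})^{-1}(0)\simeq\AA^{N(\dim_kA-1)}$ by the explicit description in Proposition~\ref{prop_existence_jet_scheme}. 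Once you do that, you are essentially carrying out the paper's embedding argument anyway, just restricted to a single fiber; the paper's version avoids the extra step of invoking the base--fiber dimension inequality and the separate bound $\dim X\le E$.
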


\begin{proof}
The first inequality follows from Corollary~\ref{smooth_case} and the fact that there is a locally closed immersion $Y\hookrightarrow X$,
with $Y$ smooth and $\dim(Y)=\dim(X)$. The second inequality also follows from Corollary~\ref{smooth_case} since 
for every $x\in X$, if $n=\dim(T_xX)$, then there is an open neighborhood $U$ of $x$ in $X$ and a closed immersion
$U\hookrightarrow Z$, where $Z$ is a smooth variety of dimension $n$.
\end{proof}

\begin{example}\label{example1}
If we consider arbitrary $A\in {\rm LFA}/k$, then we can get arbitrarily close to the upper-bound in Corollary~\ref{bound_dimension_jet_schemes}.
Indeed, if $x\in X$ is such that $n=\dim(T_xX)$ and $(A,{\mathfrak m})\in {\rm LFA}/k$ is such that ${\mathfrak m}^2=0$ and $\dim_k(A)=r$,
then 
$$(\pi_A)^{-1}(x)\simeq \AA^{(r-1)n}\,\,\text{hence}\,\,\frac{\dim(J_A(X))}{\dim_k(A)}\geq n-\frac{n}{r}.$$
\end{example}


\begin{remark}
If $G$ is a group scheme over $k$, then it is easy to see that $J_A(G)$ is a group scheme over $k$ for every 
$A\in {\rm LFA}/k$. Moreover,
if $G$ acts on a scheme $X$, then $J_A(G)$ acts on $J_A(X)$.
\end{remark}

\begin{remark}\label{action}
Suppose that $A\in {\rm LFA}/k$ is a graded\footnote{All graded algebras we consider are graded by ${\mathbf Z}_{\geq 0}$.} $k$-algebra. The grading of $A$
induces a morphism $\phi\colon \AA^1\times\Spec(A)\to\Spec(A)$ that corresponds to the
$k$-algebra homomorphism $A\to A[t]$
that takes a homogeneous element 
$a\in A$ of degree $m$ to $at^m$. If $X$ is any scheme, then 
we obtain an induced morphism $\Phi_A^X\colon \AA^1\times J_A(X)\to J_A(X)$ 
that takes any morphism $(u,v)\colon Y\to\AA^1\times J_A(X)$,
with 
$$v\in {\rm Hom}(Y,J_A(X))\simeq {\rm Hom}(Y\times\Spec(A) ,X)$$ to
the composition
\[
\begin{CD}
Y\times\Spec(A)@>{({\rm id},u\circ {\rm pr}_1)}>>
Y\times\Spec(A)\times\AA^1@>{({\rm id}_Y,\phi})>> Y\times\Spec(A)@>{v}>>X.
\end{CD}
\]
It is easy to see that the restriction of $\Phi_A^X$ to $J_A(X)\simeq\{0\}\times J_A(X)$
is equal to $s_A^X\circ\pi_A^X$.
In particular, for such $A$ we see that if $Z$ is an irreducible component of $J_A(X)$, then 
$\Phi_A^X$ induces a morphism $\AA^1\times Z\to Z$, hence $s_A^X\circ\pi_A^X(Z)\subseteq
Z$.
\end{remark}

\section{Generalized arc schemes}

We now generalize the construction in the previous section to complete local algebras. More precisely,
let ${\rm LCA}/k$ be the category of complete local Noetherian $k$-algebras, with residue field $k$ (the maps being local morphisms of $k$-algebras). 
Note that ${\rm LFA}/k$ is a full subcategory of ${\rm LCA}/k$.

Given $(A,\frm)\in
{\rm LCA}/k$ and a scheme $X$ over $k$, we define a scheme $J_A(X)$
\begin{equation}\label{eq_def_gen_arcs}
J_A(X):=\plim_{A\to B}J_B(X),
\end{equation}
where the projective limit is over all surjective maps $A\to B$ in ${\rm LCA}/k$, with $B$ lying in ${\rm LFA}/k$. 
Note that a map from $\phi\colon A\to B$ to $\psi\colon A\to C$ is a map $f\colon B\to C$ such that
$f\circ \phi=\psi$ and such a map induces a morphism $\pi^X_{C/B}\colon J_B(X)\to J_C(X)$. 
Since all $J_B(X)$ are affine schemes over $X$, it follows that the projective limit (\ref{eq_def_gen_arcs}) exists.
In fact, if $U\subseteq X$ is an affine open subset, then
$$\Gamma(J_A(U),\cO_{J_A(U)})\simeq\ilim_{A\to B}\Gamma(J_B(U),\cO_{J_B(U)}).$$

Note that if $A\in {\rm LFA}/k$, then we recover the previous definition.
It is clear that if $h\colon X\to Y$ is a morphism of schemes, we obtain an induced morphism
$J_A(h)\colon J_A(X)\to J_A(Y)$ and in this way we get a functor from the category of schemes over $k$ to itself. 
If $g\colon A_1\to A_2$ is a morphism in ${\rm LCA}/k$, we obtain a functorial transformation
$\pi_{A_2/A_1}^X\colon J_{A_1}(X)\to J_{A_2}(X)$. Indeed, if $\phi\colon A_2\to B_2$ is a surjective map in ${\rm LCA}/k$,
with $B_2$ finite over $k$, then $\phi\circ g$ factors through a quotient $B_1$ of $A_1$ by a power of the maximal ideal,
hence we have a map $J_{A_1}(X)\to J_{B_1}(X)\to J_{B_2}(X)$, where the first map is given by the definition of projective limit
and the second map is $\pi^X_{B_2/B_1}$. Note that this definition has the following two properties:
\begin{enumerate}
\item[i)] If $\phi\colon A\to B$ is a surjective map in ${\rm LCA}/k$, with $B$ finite over $k$, then
the map $J_A(X)\to J_B(X)$ given by the projective limit definition is the same as $\pi_{B/A}^X$.
\item[ii)] If $A\to B\to C$ are maps in ${\rm LCA}/k$, then 
$\pi^X_{C/B}\circ\pi^X_{B/A}=\pi_{C/A}^X$.
\end{enumerate}

\begin{remark}\label{universal_property_arcs}
Suppose that $X$ is a scheme and $A\in {\rm LCA}/k$.
For every $k$-algebra $R$, consider the completion $R\widehat{\otimes}_kA$ of $R\otimes_kA$ with respect to the topology
induced by $A$ (more precisely, if ${\mathfrak m}_A$ is the maximal ideal in $A$, then the topology on $R\otimes_kA$
is the $\frm_A\cdot (R\otimes_kA)$-adic topology). In this case we have a canonical functorial map
\begin{equation}\label{eq_universal_property_arcs}
\Hom(\Spec R\widehat{\otimes}_kA,X)\to \Hom(\Spec R,J_A(X)).
\end{equation}
It is easy to see that this is a bijection if $R=k$ or if $X$ is affine.
\end{remark}

As we see in the next example, even when $X$ is a finite type, the scheme $J_A(X)$ is not, in general, of finite type.

\begin{example}
If $A=k\llbracket t\rrbracket$, the scheme $J_A(X)=\plim_mJ_m(X)$ is the \emph{scheme of arcs} of $X$, denoted by $J_{\infty}(X)$.
For example, if $X=\AA^n$, with $n\geq 1$, then $J_{\infty}(X)$ is an infinite-dimensional affine space, that is,
$J_{\infty}(X)\simeq\Spec k[x_1,x_2,\ldots]$. 

If $A=\llbracket t_1,t_2\rrbracket$, then $J_A(X)$ is known as the \emph{space of
wedges} of $X$. It is easy to deduce from Remark~\ref{universal_property_arcs} that this is canonically isomorphic to $J_{\infty}(J_{\infty}(X))$.
More generally, if we put $A_n=k\llbracket x_1,\ldots,x_n\rrbracket$, then we have a canonical isomorphism
$J_{\infty}(J_{A_n}(X))\simeq J_{A_{n+1}}(X)$ for every $n\geq 1$. 
\end{example}

\begin{remark}
It would be interesting to have explicit examples of schemes $J_A(X)$, when $X$ is singular. Very few such examples are known 
and all of these only deal with $J_m(X)$ or $J_{\infty}(X)$. Moreover, in almost all cases one can only describe the reduced structure on these spaces.
An easy example is that of schemes defined by monomial ideals in a polynomial ring (see \cite[Proposition~4.10]{Mus02}). A more interesting example
is that of $J_{\infty}(X)$, when $X$ is a toric variety. In this case, if $T$ is the torus acting on $X$, one can completely describe the orbits of the
$J_{\infty}(T)$-action on $J_{\infty}(X)$, see \cite{Ishii04}). It is much trickier to describe $J_m(X)$ for a toric variety $X$; this is only understood in
the $2$-dimensional case, see \cite{Mourtada11}. One example in which both
$J_m(X)$ and $J_{\infty}(X)$ are well understood is that of a generic determinantal variety $X$. In this case, $X$ is a closed subscheme
of a space of matrices $M=M_{m,n}(k)$ and the group $G=GL_m(k)\times GL_n(k)$ acts on $M$ inducing an action on $X$. 
For a description of the orbits of $J_m(G)$ on $J_m(X)$ and of the orbits of $J_{\infty}(G)$ on $J_{\infty}(X)$, see \cite{Docampo}.
\end{remark}

\section{Dimensions of jet schemes and invariants of singularities}

Beyond the formal properties that we discussed, little is known about jet schemes in the generality that we considered in the previous two sections. 
We now restrict to the case of the ``usual" jet schemes $J_m(X)$, for which a lot is known due to the connection to birational geometry
that comes out of the theory of motivic integration. 

In order to describe this connection, we first recall how one measures singularities in birational geometry. The idea is to use \emph{all} divisorial valuations,
suitably normalized by the order of vanishing along the relative Jacobian ideal. From now on we assume that the ground field $k$ is algebraically closed, of characteristic $0$,
and we only consider the $k$-valued points of the schemes involved. 

For simplicity, we assume that we work on an ambient smooth variety $X$. Let $\fra$ be a nonzero ideal on $X$ (all ideals are assumed to be coherent).
A \emph{divisor over} $X$ is a prime divisor on a normal variety $Y$ that has a birational morphism to $X$ ($Y$ is a \emph{birational model over} $X$). Each such divisor induces a valuation
${\rm ord}_E$ of the function field $K(Y)=K(X)$. 
We identify two such divisors if they give the same valuation. In particular, if $Z\to Y$ is a birational morphism, with $Z$ normal, then we identify $E$ with its strict transform on $Z$.
Therefore we may always assume that $Y$ is smooth (using a resolution of singularities of $Y$) and that $Y$ is proper over $X$ (using Nagata's compactification theorem).
Given a divisor $E$ over $X$, its center on $X$ denoted $c_X(E)$ is the closure of the image of $E$ in $X$ (it is easy to see that this is independent of the chosen model).

Let $E$ be a divisor over $X$. To a nonzero ideal $\fra$ on $X$, we can attach a nonnegative integer $\ord_E(\fra)$, defined as follows.
We may assume that $E$ is a divisor on a smooth variety $Y$ over $X$, such that the structural morphism $f\colon Y\to X$ factors through the blow-up
along $\fra$. Therefore we may write $\fra\cdot\cO_Y=\cO_Y(-D)$ for an effective divisor $D$ on $Y$ and $\ord_E(\fra)=\ord_E(D)$ is the coefficient of $E$ in $D$. 
For example, if $x\in X$ and $E$ is the exceptional divisor on the blow-up of $X$ at $x$, then $\ord_E(\fra)$ is the 
\emph{order of} $\fra$ at $x$, denoted by $\ord_x(\fra)$. This is characterized by
$$\ord_x(\fra):=\max\{r\mid\fra\subseteq\frm_x^r\},$$
where $\frm_x$ is the ideal defining $x$. 

The idea is to measure the singularities of $\fra$ using all invariants $\ord_E(\fra)$, where $E$ varies over the divisors over $X$. Very roughly, one
thinks of the singularities of $\fra$ being ``worse" if $\ord_E(\fra)$ is larger. On the other hand, when we vary $E$, the numbers $\ord_E(\fra)$ are unbounded,
hence we need a normalizing factor. It turns out that the right factor to use is provided by the \emph{log discrepancy}, which is defined as follows. If 
$f\colon Y\to X$ is a proper, birational morphism, with $Y$ a smooth variety and
$E$ is a prime divisor 
on $Y$, then the \emph{relative canonical class} $K_{Y/X}$ is the degeneracy locus of the morphism of
vector bundles of the same rank $f^*(\Omega_X)\to\Omega_Y$. In other words, $K_{Y/X}$ is locally defined by the determinant of the Jacobian
matrix of $f$. The log discrepancy of $\ord_E$ is $A(\ord_E):=\ord_E(K_{Y/X})+1$. It is easy to check that the definition does not depend on the model $Y$
we have chosen. 

There are various related invariants of singularities that one considers in birational geometry. In what follows, we focus on two such invariants,
the \emph{log canonical threshold} and the \emph{minimal log discrepancy}. We begin by introducing the former invariant.
If $X$ is smooth and $\fra$ is a nonzero ideal on $X$, then the log canonical threshold of $\fra$ is
$$\lct(\fra):=\inf_E\frac{A(\ord_E)}{\ord_E(\fra)},$$
where the infimum is over all divisors $E$ over $X$. 
Note that this is finite whenever $\fra\neq\cO_X$ and by convention we put $\lct(\cO_X)=\infty$ and $\lct(0)=0$.
If $W$ is the closed subscheme defined by $\fra$, we sometimes write $\lct(X,W)$ for $\lct(\fra)$.
Note that in the definition of $\lct(\fra)$ we consider the reciprocals of the invariants $\ord_E(\fra)$,
hence ``worse" singularities correspond to smaller log canonical thresholds.

While defined in terms of all divisors over $X$, it is a consequence of resolution of singularities
that the log canonical threshold can be computed on suitable models. Recall that a log resolution of the pair $(X,\fra)$ is a proper birational morphism
$f\colon Y\to X$, with $Y$ smooth, such that $\fra\cdot\cO_Y=\cO_Y(-D)$ for an effective divisor $D$ on $Y$ such that $D+K_{Y/X}$ has simple normal crossings\footnote{A divisor on a smooth variety has simple normal crossings
if around every point we can find local algebraic coordinates $x_1,\ldots,x_n$ such that the divisor is defined by $x_1^{a_1}\cdots x_n^{a_n}$ for some nonnegative integers $a_1,\ldots,a_n$.}.
The existence of such resolutions follows from Hironaka's fundamental results. A basic result about log canonical thresholds says that if $f\colon Y\to X$ is a log resolution of $(X,\fra)$ as above, then $\lct(\fra)$ is computed by the divisors on $Y$:
if we write $D=\sum_{i=1}^ra_iE_i$ and $K_{Y/X}=\sum_{i=1}^rk_iE_i$, then
$$\lct(\fra)=\min_{i=1}^r\frac{k_i+1}{a_i}.$$
A consequence of this formula is the fact, not apparent from the definition,  that $\lct(\fra)$ is a rational number. 

The log canonical threshold is a fundamental invariant of singularities. It appeared implicitly already in Atiyah's paper \cite{Atiyah} in connection with the meromorphic continuation of
complex powers. 
The first properties of the log canonical threshold have been proved by Varchenko
in connection with his work on
 asymptotic expansions of integrals and mixed Hodge structures on the vanishing cohomology, see \cite{Varchenko1}, \cite{Varchenko2},
 and \cite{Varchenko3}.
It was Shokurov who introduced the log canonical threshold in the context of birational geometry
in \cite{Sho}.  From this point of view, $\lct(\fra)$ is the largest rational number $q$ such that the pair $(X,\fra^q)$ is \emph{log canonical}.
We mention that the notion of
log canonical pairs is of central importance in the Minimal Model Program, since it gives 
the largest class of varieties for which one can hope to apply the program.
In fact, 
in the context of birational geometry it is useful to not require that the ambient variety is smooth,
but only that it has mild singularities, and it is in this more general setting that one can define the log canonical threshold. 
A remarkable feature of this invariant is that it is related to many points of view on singularities. We refer to \cite{Kol97} and \cite{Mus12}
for an overview of some of these connections and for the basic properties of the log canonical threshold.

\begin{example}
In order to illustrate the behavior of the log canonical threshold, we list a few examples.
When $\fra$ is generated by $f\in\cO(X)$, we simply write $\lct(f)$ for the corresponding log canonical threshold.
\begin{enumerate}
\item[i)] If the subscheme $V(\fra)$ defined by $\fra$ has codimension $r$, then $\lct(\fra)\leq r$. This is an equality if $V(\fra)$ is smooth.
\item[iii)] If $f=x_1^{a_1}\cdots x_n^{a_n}\in k[x_1,\ldots,x_n]$, then $\lct(f)=\min_i\frac{1}{a_i}$.
\item[iii)] If $f=x_1^{a_1}+\ldots+x_n^{a_n}\in k[x_1,\ldots,x_n]$, then $\lct(f)=\min\left\{1,\sum_i\frac{1}{a_i}\right\}$.
\end{enumerate}
\end{example}

\begin{remark}
In terms of size, the log canonical threshold in a neighborhood of a point $x\in X$ is comparable to $\ord_x(\fra)$. More precisely, 
given $x\in X$, there is an open neighborhood $U$ of $x$ such that the following inequalities hold:
$$\frac{1}{\ord_x(\fra)}\leq\lct(\fra\vert_U)\leq\frac{\dim(X)}{\ord_x(\fra)}.$$
\end{remark}

It turns out that the log canonical threshold governs the growth of the dimensions of the jet schemes $J_m(W)$ of a scheme $W$.
Note that by taking a finite affine open cover of $W$, we can reduce to the case when $W$ can be embedded in a smooth variety (for example,
in an affine space).

\begin{theorem}\label{connection_log_canonical_threshold}
If $X$ is a smooth $n$-dimensional variety and $W$
is a proper, nonempty, closed subscheme of $X$, then
$$\lim_{m\to\infty}\frac{\dim(J_m(W))}{m+1}=\max_m\frac{\dim(J_m(W))}{m+1}=n-\lct(X,W).$$
Moreover, the maximum above is achieved for all $m$ such that $(m+1)$ is divisible enough.
\end{theorem}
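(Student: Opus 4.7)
The plan is to translate the computation of $\dim J_m(W)$ into an optimization problem on a log resolution, using the technology of arc spaces, and then to read off the optimum as the log canonical threshold.

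First, I would fix a log resolution $f\colon Y\to X$ of the pair $(X,W)$ and write
\[
f^{-1}(W)=\sum_{i=1}^r a_iE_i,\qquad K_{Y/X}=\sum_{i=1}^r k_iE_i,
\]
so that $\lct(X,W)=\min_i(k_i+1)/a_i$. For each $\vec\nu\in\ZZ_{\geq 0}^r$ with $\bigcap_{\nu_i>0}E_i\neq\emptyset$, the multi-contact cylinder
\[
C_{\vec\nu}:=\bigl\{\gamma\in J_\infty(Y):\ord_\gamma(E_i)=\nu_i\ \text{for all}\ i\bigr\}
\]
has codimension $\sum_i\nu_i$ in $J_\infty(Y)$, since $Y$ is smooth.

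The central step is to establish the dimension formula
\[
\dim J_m(W)=\max_{\vec\nu}\Bigl\{(m+1)n-\sum_i(k_i+1)\nu_i\Bigr\},
\]
where the maximum is over admissible $\vec\nu$ with $\sum_ia_i\nu_i\geq m+1$, valid for $m$ large (a one-sided $\leq$ suffices for all $m$). Letting $\pi_m\colon J_\infty(X)\to J_m(X)$ denote the truncation, $J_m(W)$ coincides with $\pi_m\bigl({\rm Cont}^{\geq m+1}(W)\bigr)$ because smoothness of $X$ allows every $m$-jet of $W$ to be lifted to an arc of $X$. The contact locus ${\rm Cont}^{\geq m+1}(W)$ decomposes, up to negligible strata, as the union of the images $f_\infty(C_{\vec\nu})$ with $\sum_ia_i\nu_i\geq m+1$. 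The Kontsevich--Denef--Loeser change-of-variables principle, applied at the level of dimensions of constructible subsets, gives that $\pi_m(f_\infty(C_{\vec\nu}))$ has codimension $\sum_i(k_i+1)\nu_i$ in $J_m(X)$: the $\sum_i\nu_i$ part is the codimension of $C_{\vec\nu}$, while the extra $\sum_ik_i\nu_i$ is the generic fiber dimension of $f_\infty$ on $C_{\vec\nu}$, controlled by the vanishing order of the Jacobian $K_{Y/X}$ along a generic arc in the cylinder.

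Given the formula, the remainder is a one-line optimization. For any admissible $\vec\nu$,
\[
\frac{\sum_i(k_i+1)\nu_i}{m+1}\ \geq\ \frac{\sum_i(k_i+1)\nu_i}{\sum_ia_i\nu_i}\ \geq\ \min_i\frac{k_i+1}{a_i}\ =\ \lct(X,W),
\]
so $\dim J_m(W)/(m+1)\leq n-\lct(X,W)$ for every $m$. Conversely, fix $i_0$ with $(k_{i_0}+1)/a_{i_0}=\lct(X,W)$; the vector with $\nu_{i_0}=(m+1)/a_{i_0}$ and all other entries zero is admissible whenever $a_{i_0}\mid m+1$, and both displayed inequalities become equalities, yielding $\dim J_m(W)/(m+1)=n-\lct(X,W)$ for such $m$. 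This gives the maximum, the limit, and the divisibility statement simultaneously.

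The main obstacle is the dimension formula for $\pi_m(f_\infty(C_{\vec\nu}))$: it depends on the nontrivial facts that cylinders in arc spaces have well-defined codimensions which stabilize under truncation, and that $f_\infty$ has controlled generic fiber dimension along such cylinders---both derived from the birational transformation rule underlying motivic integration. The optimization step, by contrast, is routine once that formula is granted.
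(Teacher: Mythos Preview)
Your proposal is correct and follows essentially the same route as the paper: the paper deduces the theorem from the codimension formula for contact loci (Theorem~\ref{formula_contact_loci}), which is exactly your ``dimension formula'' and is established via the Kontsevich--Denef--Loeser change of variables on a log resolution, after which the optimization yielding $\lct(X,W)=\min_i(k_i+1)/a_i$ is straightforward. Your identification of the hard step (the transformation rule controlling the codimension of $f_\infty(C_{\vec\nu})$) versus the routine step (the linear-programming argument, with equality attained when $a_{i_0}\mid m+1$) matches the paper's presentation precisely.
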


This result was proved in \cite{Mus02} by making use of the change of variable formula in motivic integration (in fact, the formula had appeared implicitly
earlier in \cite{DL98}). We explain below, following 
\cite{EinLazarsfeldMustata}, how this theorem follows from a more general result relating the approach to singularities via divisors over $X$ and that 
using certain subsets in the space of arcs $J_{\infty}(X)$. Before doing this, we discuss another invariant of singularities, whose definition is similar to that of the log canonical threshold,
but whose behavior turns out to be more difficult to study. 

Suppose, as above, that $X$ is a smooth variety, $\fra$ is a nonzero ideal on $X$, and $q$ is a positive rational number.
In order to avoid some pathologies of a trivial nature, we assume $\dim(X)\geq 2$. If $Z$ is a proper, irreducible closed subset of $X$,
then the \emph{minimal log discrepancy} 
$\mld_Z(X,\fra^q)$ is defined as
\begin{equation}\label{eq_def_mld0}
\mld_Z(X,\fra^q):=\inf\{A(\ord_E)-q\cdot \ord_E(\fra)\mid c_X(E)\subseteq Z\}.
\end{equation}
Note that ``better" singularities correspond to larger minimal log discrepancies.
It is a basic fact that the minimal log discrepancy is either $-\infty$ or it is nonnegative.
We have ${\rm mld}_{Z}(X,\fra^q)\geq 0$ if and only if
the pair $(X,\fra^q)$ is log canonical around $Z$, that is, 
there is an open neighborhood $U$ of $Z$ such that $\lct(\fra\vert_U)\geq q$.
Like the log canonical threshold, the minimal log discrepancy can be computed on suitable models. More precisely, if $I_Z$ is the ideal defining $Z$
and $f\colon Y\to X$ is a log resolution of $(X,I_Z\cdot\fra)$,
 then there is a prime  divisor $E$ on $Y$ that achieves the infimum in (\ref{eq_def_mld0}),
assuming that this infimum is not $-\infty$ (moreover, finitenes also can be tested just on the divisors on $Y$). 
For an introduction to minimal log discrepancies, we refer to \cite{ambro}.
The following result gives an interpretation of minimal log discrepancies in terms of jet schemes.

\begin{theorem}\label{connection_mld}
Let $X$ be a smooth variety of dimension $\geq 2$, $\fra$ a nonzero ideal on $X$ defining the subscheme $W$, and $q$ a positive rational number.
For every proper, irreducible closed subset $Z$ of $X$, we have 
$$\mld_Z(X,\fra^q)=\inf\{(m+1)(\dim(X)-q)-\dim(J_m(W)\cap (\pi_m^W)^{-1}(Z))\}.$$
Moreover, if the infimum is not $-\infty$, then it is a minimum.
\end{theorem}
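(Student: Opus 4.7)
The strategy is to evaluate both sides on a common log resolution and compare them combinatorially. I would fix a log resolution $f\colon Y\to X$ of $I_Z\cdot\fra$ and let $E_1,\ldots,E_N$ be the relevant prime divisors on $Y$, writing $\fra\cdot\cO_Y=\cO_Y(-\sum_i a_iE_i)$, $I_Z\cdot\cO_Y=\cO_Y(-\sum_i b_iE_i)$, and $K_{Y/X}=\sum_i k_iE_i$. The fact that $\mld$ can be tested on a log resolution (proved exactly as for $\lct$) then rewrites the left-hand side as
\[
\mld_Z(X,\fra^q)=\min_{i\,:\,b_i>0}\bigl((k_i+1)-qa_i\bigr),
\]
interpreted as $-\infty$ whenever this minimum is negative. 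The task is thus to match the jet-scheme infimum with this combinatorial quantity.

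I would then interpret $J_m(W)\cap(\pi_m^W)^{-1}(Z)$ as the image under the truncation $\psi_m\colon J_\infty(X)\to J_m(X)$ of the cylinder
\[
C_m:=\{\gamma\in J_\infty(X):\ord_\gamma(\fra)\geq m+1,\ \ord_\gamma(I_Z)\geq 1\},
\]
so that $(m+1)\dim X-\dim\bigl(J_m(W)\cap(\pi_m^W)^{-1}(Z)\bigr)=\codim_{J_\infty(X)}C_m$. Using the valuative criterion to lift arcs (not contained in the exceptional locus) through $f$, I would decompose $C_m$ as the disjoint union of the images $f_\infty(\mathrm{Cont}^\nu)$ over $\nu\in\ZZ_{\geq 0}^N$ satisfying $\sum_i a_i\nu_i\geq m+1$ and $\sum_i b_i\nu_i\geq 1$, where $\mathrm{Cont}^\nu=\{\tilde\gamma\in J_\infty(Y):\ord_{\tilde\gamma}(E_i)=\nu_i\text{ for each }i\}$. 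Because the $E_i$ have simple normal crossings, $\mathrm{Cont}^\nu$ is a cylinder of codimension $\sum_i\nu_i$ in $J_\infty(Y)$.

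The heart of the argument, and the main obstacle, is the change-of-variables input from motivic integration: away from a thin locus, $f_\infty$ restricted to $\mathrm{Cont}^\nu$ is a piecewise-trivial fibration onto its image with fibers of dimension $\sum_i k_i\nu_i$, so $f_\infty(\mathrm{Cont}^\nu)$ has codimension $\sum_i(k_i+1)\nu_i$ in $J_\infty(X)$. This is the same tool that underlies Theorem~\ref{connection_log_canonical_threshold}, and I would import it (following \cite{EinLazarsfeldMustata}) rather than reprove it; the delicate part is controlling the contribution of arcs entirely inside the exceptional locus of $f$, and the argument genuinely uses smoothness of $Y$ together with the SNC hypothesis. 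Granting this, taking the stratum of maximum dimension yields
\[
\dim\bigl(J_m(W)\cap(\pi_m^W)^{-1}(Z)\bigr)=(m+1)\dim X-\min_\nu\sum_i(k_i+1)\nu_i,
\]
the minimum being over the constraint set above.

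It remains a clean combinatorial problem to compute $\inf_{m,\nu}\bigl[\sum_i(k_i+1)\nu_i-(m+1)q\bigr]$. For each nonzero $\nu$ the optimal $m$ is $m+1=\sum_i a_i\nu_i$, reducing the problem to $\inf_\nu\sum_i\nu_i[(k_i+1)-qa_i]$ subject to $\sum_i b_i\nu_i\geq 1$. If some bracket $(k_j+1)-qa_j$ is negative with $b_j>0$, scaling $\nu_j$ drives the infimum to $-\infty$, matching $\mld_Z=-\infty$; otherwise the minimum is attained at $\nu=\be_j$ for the best $j$ with $b_j>0$, returning exactly $\mld_Z(X,\fra^q)$. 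Since this value is realized at the specific integer $m+1=a_j$, the infimum is a minimum, giving the final assertion.
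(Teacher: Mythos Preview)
Your overall route---compute both sides on a fixed log resolution of $(X,I_Z\cdot\fra)$ and match them via the change-of-variable formula---is exactly the approach the paper indicates (it says the result follows from Theorem~\ref{ELM}, whose proof in turn runs through the stratification of contact loci as in Theorem~\ref{formula_contact_loci}). The translation of $J_m(W)\cap(\pi_m^W)^{-1}(Z)$ into a cylinder and the codimension computation for $f_\infty(\mathrm{Cont}^\nu)$ are set up correctly.

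The gap is in the final combinatorial paragraph, and it propagates from an inaccurate first paragraph. Your dichotomy ``if some bracket $(k_j+1)-qa_j$ with $b_j>0$ is negative then $-\infty$; otherwise $\nu=\be_j$ for the best such $j$'' misses a case. Take $X=\AA^2$, $\fra=(x^2)$, $Z=\{0\}$, $q=3/4$, with the log resolution given by the blow-up of the origin. The exceptional divisor $E$ has $b_E=1$ and bracket $2-3/2=1/2>0$, while the strict transform $\widetilde{V(x)}$ has $b=0$ and bracket $1-3/2=-1/2$. Your test sees only $E$ and would output $1/2$, but $\mld_Z=-\infty$ here, and your own linear program detects it: the choice $\nu=N\cdot\be_{\widetilde{V(x)}}+\be_E$ is feasible (since $E\cap\widetilde{V(x)}\neq\emptyset$ and $b_E>0$) and sends the objective to $-\infty$. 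The correct criterion, both for $\mld_Z$ on the log resolution and for your infimum, is that one gets $-\infty$ exactly when some $(k_j+1)-qa_j<0$ with $E_j\cap f^{-1}(Z)\neq\emptyset$, not merely with $b_j>0$. Relatedly, you drop the nonemptiness constraint $\bigcap_{\nu_i>0}E_i\neq\emptyset$ throughout; without it your codimension formula for $\mathrm{Cont}^\nu$ is false and the optimization is over the wrong feasible set. Once you carry that constraint, the finite case also needs the observation that any feasible $\nu$ has all its support meeting $f^{-1}(Z)$, so log canonicity around $Z$ forces all the relevant brackets to be $\geq 0$, after which $\nu=\be_{j^*}$ is indeed optimal (and $a_{j^*}>0$ needs a word---it holds because $c_X(E_{j^*})\subseteq Z$ forces $E_{j^*}$ to appear in the relevant pullback, at least when $Z$ meets $W$).
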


The theorem was proved in \cite{EMY} using the change of variable formula in motivic integration. In this special form in which $X$ is assumed to be smooth,
it can also be deduced from the main result in \cite{EinLazarsfeldMustata} that we discuss below. 

We note that part of the interest in invariants of singularities like the log canonical threshold and the minimal log discrepancy comes from the fact that their behavior is related 
to one of the outstanding open problems in birational geometry, namely Termination of Flips (see \cite{Birkar} for the connection of this conjecture to log canonical thresholds
and \cite{Shokurov2} for the connection to minimal log discrepancies). In this respect, there are two points to keep in mind:

\noindent $\bullet$ For all applications to birational geometry, it is important to work with ambient varieties that have mild (log canonical) singularities, and not just smooth, as above.
In this context, one can still describe minimal log discrepancies in terms of properties of (certain subsets in) jet schemes, see \cite{EMY}. However, this description is less effective in general.
It was used to prove some open questions about minimal log discrepancies, such as Inversion of Adjunction and Semicontinuity in the case when the ambient variety
is locally complete intersection, see \cite{EMY} and \cite{EM04}. However, this method has not been successful so far in dealing with more general ambient varieties.

\noindent $\bullet$ While properties of log canonical thresholds are better understood (see below for the ACC property), it is in fact the (conjectural) properties of minimal log discrepancies that
would give a positive answer to the Termination of Flips Conjecture. More precisely, Shokurov has shown that two conjectures on minimal log discrepancies, the Semicontinuity Conjecture and the ACC Conjecture
imply Termination of Flips; see \cite{Shokurov2} for the precise statements. 

In order to give the flavor of ACC statements regarding invariants of singularities, we state the following result concerning log canonical thresholds.

\begin{theorem}\label{ACC_lct}
For every $n\geq 1$, the set of rational numbers 
$$\{\lct(\fra)\mid \fra\subsetneq\cO_X,\,X\,{\rm smooth},\,\dim(X)\leq n\}$$
satisfies ACC, that is, it contains no infinite strictly increasing sequences.
\end{theorem}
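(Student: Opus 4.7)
The plan is to argue by induction on $n=\dim(X)$, following the strategy of Hacon--McKernan--Xu. The base case $n=1$ is easy: on a smooth curve every nonzero ideal is locally of the form $(t^m)$, so $\lct(\fra)=1/m$, and $\{1/m\mid m\geq 1\}$ visibly satisfies ACC. For the inductive step I assume ACC holds in dimensions less than $n$ and seek a contradiction from a hypothetical strictly increasing sequence $c_i=\lct(\fra_i)$ with limit $c$, where each $\fra_i$ is a nonzero ideal on a smooth $n$-dimensional variety $X_i$.

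The first step is to pass from ideals to divisorial boundaries. For each $i$, fix a log resolution $f_i\colon Y_i\to X_i$ and choose an exceptional prime $E_i$ computing the lct, so that $A(\ord_{E_i})=c_i\cdot\ord_{E_i}(\fra_i)$ and $E_i$ is a log canonical place of the pair $(X_i,\fra_i^{c_i})$. After localizing at the center of $E_i$, one uses the relative MMP with scaling (Birkar--Cascini--Hacon--McKernan) to extract $E_i$ as the unique exceptional divisor of a birational model $g_i\colon Z_i\to X_i$, producing a projective log canonical pair $(Z_i,\,c_i\cdot\fra_i\cdot\cO_{Z_i}+E_i)$ in which $E_i$ appears with coefficient one.

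The key second step is adjunction. Restricting to $E_i$ via Kawamata's subadjunction, or Koll\'ar's theory of sources and crepant log structures, yields a boundary $\Delta_{E_i}$ on $E_i$ such that $(E_i,\Delta_{E_i})$ is log canonical with $\dim(E_i)=n-1$. Crucially, the coefficients of $\Delta_{E_i}$ lie in the \emph{hyperstandard} set associated to $\{c_i\}$: each is of the form $(m-1+\sum_j r_j c_i)/m$ for nonnegative integers $m,r_j$. The inductive hypothesis together with Shokurov's \emph{Global ACC} (stating that the coefficients in log canonical log Calabi--Yau pairs of bounded dimension form a DCC set) then forces stabilization along a subsequence, which combined with the strict monotonicity of $(c_i)$ contradicts the prescribed DCC behavior of the induced coefficients on $E_i$.

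The main obstacle is that the argument cannot really be carried out as an induction on lct-ACC in isolation: one must establish the joint package of ACC for log canonical thresholds, Global ACC for log Calabi--Yau pairs, and ACC for volumes of log canonical pairs simultaneously by induction on $n$, since each feeds into the proof of the others. Each ingredient is itself deep---divisor extraction requires the full relative MMP with scaling, the adjunction step requires the Kawamata--Koll\'ar technology controlling the different along a log canonical center, and the final DCC contradiction rests on a delicate compactness argument for sequences of log canonical pairs (via adic or ultrafilter limits). Closing this simultaneous induction is the central technical difficulty, and occupies the bulk of the Hacon--McKernan--Xu paper.
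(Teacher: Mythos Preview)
The paper does not actually prove this theorem; it only states it and attributes the smooth case to \cite{dFEM10} and the general (singular ambient) case to \cite{HMX}. Your outline follows the Hacon--M\textsuperscript{c}Kernan--Xu strategy, which is the route for the general version, but for the smooth statement as given here the cited proof in \cite{dFEM10} is quite different and substantially more elementary: given a strictly increasing sequence $\lct(\fra_i)$ with $\fra_i$ on smooth $n$-folds, one reduces to ideals in $k\llbracket x_1,\ldots,x_n\rrbracket$, forms a \emph{generic limit} ideal $\fra$ (an ultraproduct-type construction), and then uses semicontinuity of log canonical thresholds together with the fact that $\lct(\fra)$ is already computed by some truncation $\fra+\frm^N$ to contradict strict monotonicity. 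No divisor extraction, relative MMP, adjunction, or Global ACC enters.

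So your sketch is a correct high-level roadmap of a valid (and more general) argument, but it is not the approach the paper points to for this statement, and it invokes far heavier machinery than is needed in the smooth case. The trade-off is that the \cite{dFEM10} method is lighter and self-contained but does not extend to singular ambient varieties, whereas the HMX package you describe handles arbitrary log canonical pairs at the cost of the full simultaneous induction you correctly flag as the main difficulty.
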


This result was conjectured by Shokurov and proved in \cite{dFEM10}. A general version, in which the ambient variety is not assumed to be smooth (which, as we have already mentioned,
is much more useful for birational geometry) has been recently proved in \cite{HMX}.
It is worth mentioning that a corresponding conjecture for minimal log discrepancies is widely open even in the case of smooth ambient varieties. For recent progress motivated by this question,
see  \cite{Kawakita1} and \cite{Kawakita2}.

Besides the ACC Conjecture for minimal log discrepancies, the other important open problem about these invariants concerns their semicontinuity
(this was conjectured by Ambro \cite{ambro}). As we have already mentioned, when the ambient variety is smooth, this can be deduced from
Theorem~\ref{connection_mld} using general properties of the dimension of algebraic varieties in families.

\begin{corollary}
If $X$ is a smooth variety, $\fra$ is a nonzero ideal on $X$, and $q$ is a positive rational number, then then the map
$$X\ni x\to \mld_x(X,\fra^q)$$
is lower semicontinuous. 
\end{corollary}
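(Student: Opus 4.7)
The plan is to apply Theorem~\ref{connection_mld} with $Z=\{x\}$ for closed points $x\in X$, which yields
$$\mld_x(X,\fra^q) = \inf_m g_m(x), \qquad g_m(x) := (m+1)(n-q) - \dim\bigl[J_m(W)\cap (\pi_m^W)^{-1}(x)\bigr],$$
where $n=\dim X$, $W=V(\fra)$, and the infimum is a minimum whenever it is finite. The morphism $\pi_m^W\colon J_m(W)\to W$ is of finite type, so Chevalley's upper semicontinuity of fiber dimensions shows that for each fixed $m$, the function $d_m(x):=\dim(\pi_m^W)^{-1}(x)$ is upper semicontinuous on $W$; consequently each $g_m$ is lower semicontinuous on $X$ (for $x\notin W$ we have $\mld_x=n$ trivially).

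What remains is to upgrade lower semicontinuity of each $g_m$ to lower semicontinuity of the infimum $\mld_x$. Fix $\alpha\in\RR$; the task is to show that
$$T_\alpha := \{x\in X : \mld_x(X,\fra^q)\leq \alpha\} = \bigcup_{m\in\NN}\{x : g_m(x)\leq\alpha\}$$
is closed. Each summand on the right is closed by the lower semicontinuity of $g_m$, but a priori the union is only an $F_\sigma$ set, so the key additional input is a finiteness statement: only finitely many $m$ contribute to $T_\alpha$. This follows from the constructibility of $\mld_x$, which in turn is visible from a log resolution $f\colon Y\to X$ of $(X,\fra)$: writing $\fra\cdot\cO_Y = \cO_Y(-\sum a_iE_i)$ and $K_{Y/X}=\sum k_iE_i$, the minimum in Theorem~\ref{connection_mld} is realized by one of the finitely many $E_i$ with $x\in c_X(E_i)$, giving
$$T_\alpha = \bigcup_{\,i\,:\,k_i+1-qa_i\leq\alpha} c_X(E_i),$$
a finite union of closed subsets of $X$. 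Hence $T_\alpha$ is closed and $\mld_x$ is lower semicontinuous.

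The main obstacle is precisely this finiteness step: pointwise lower semicontinuity of each $g_m$ is not enough, since the countable family $\{g_m\}$ can produce a non-closed union of level sets. Extracting the uniform bound on $m$ requires tracing through the proof of Theorem~\ref{connection_mld} (via the change of variable formula) to see that the minimum in the jet-scheme formula is realized by divisors on a fixed log resolution, which are finite in number. Alternatively, one can argue by Noetherianity that the stratifications of $X$ defined by the constructible functions $d_m$ can only refine finitely often, producing a single finite stratification on which $\mld_x$ is constant. Either approach reduces the problem to a finite union of closed sets, from which lower semicontinuity is immediate.
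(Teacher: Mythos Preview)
Your strategy---invoke Theorem~\ref{connection_mld} with $Z=\{x\}$, then use Chevalley's upper semicontinuity of fiber dimension to make each $g_m$ lower semicontinuous---is exactly what the paper has in mind, and you are right to flag that passing from ``each $g_m$ is lsc'' to ``$\inf_m g_m$ is lsc'' is not automatic. The paper's one-line proof does not spell this out.

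However, your resolution of this point contains a genuine error. You assert that a single log resolution $f\colon Y\to X$ of $(X,\fra)$ computes $\mld_x$ for every closed point $x$ via the divisors $E_i$ passing through $x$, yielding $T_\alpha=\bigcup_{i:\,k_i+1-qa_i\le\alpha}c_X(E_i)$. This is false: as the paper states just before Theorem~\ref{connection_mld}, computing $\mld_Z$ requires a log resolution of $(X,I_Z\cdot\fra)$, which depends on $Z$. Concretely, take $X=\AA^2$, $\fra=(xy)$, $q=1$. The identity map is already a log resolution; the two components $E_1=\{x=0\}$, $E_2=\{y=0\}$ both have $k_i+1-qa_i=0$, so your formula gives $T_0=E_1\cup E_2$. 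But for $p=(0,t)$ with $t\neq 0$, the exceptional divisor of the blow-up at $p$ has $A=2$ and $\ord(\fra)=1$, and one checks $\mld_p=1$; hence $T_0=\{(0,0)\}$, strictly smaller than your formula predicts. The divisors on a fixed log resolution of $(X,\fra)$ need not have center $\{x\}$, and the discrepancy of a further blow-up centered at a point of $E_i$ picks up an extra contribution that your formula ignores.

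Your Noetherian alternative is also not a proof as written: the stratifications of $X$ by the level sets of $d_m$ can refine indefinitely as $m$ grows, and Noetherian spaces satisfy the descending chain condition on closed sets, not the ascending one, so an increasing union $\bigcup_{m\le M}S_m$ of closed sets has no reason to stabilize. One correct route (essentially that of \cite{EMY}) is to work with scheme points: upper semicontinuity of $d_m$ on all of $X$ gives $g_m(z)\le g_m(\eta)$ whenever $z$ specializes $\eta$, hence $T_\alpha$ is stable under specialization; combining this with the identity $\mld_x=\mld_Z+\dim Z$ for a \emph{general} closed point $x$ of an irreducible closed $Z$ (which itself follows from Theorem~\ref{connection_mld} applied to $Z$ together with the fiber-dimension bound $D_m(Z)=d_m(\eta)+\dim Z$ for generic $\eta\in Z$) lets one run a Noetherian induction on the components of $\overline{T_\alpha}$. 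This is more delicate than the formula you wrote down.
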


Our next goal is to describe more generally, following \cite{EinLazarsfeldMustata}, a dictionary between the approach to singularities using
divisorial valuations and that using certain subsets in the space of arcs. We fix a smooth variety $X$ of dimension $n$. We will be concerned with 
certain subsets in the space of arcs $J_{\infty}(X)$. In what follows we restrict to the $k$-valued points of $J_{\infty}(X)$, considered
as a topological space with the Zariski topology.
Recall that we have canonical projections $\pi_m\colon J_{\infty}(X)\to J_m(X)$. A \emph{cylinder} in $J_{\infty}(X)$ is a subset of the form $C=\pi_m^{-1}(S)$, 
where $S$ is a constructible subset in $J_m(X)$. We say that $C$ is closed, locally closed, or irreducible, if $S$ has this property. Moreover, we put
$$\codim(C):=\codim(S,J_m(X))=(m+1)n-\dim(S).$$
It is easy to see that all these notions do not depend on $m$, since the natural projections $J_{m+1}(X)\to J_m(X)$ are locally trivial with fiber $\AA^n$. 

The main examples arise as follows. Suppose that $\fra$ is a nonzero ideal on $X$, defining the subscheme $W$. Associated to $W$ we have a function
$\ord_W\colon J_{\infty}(X)\to \ZZ_{\geq 0}\cup\{\infty\}$ such that for $\gamma\colon\Spec k\llbracket t\rrbracket\to X$, we have
$$\ord_W(\gamma)=\ord_t(\gamma^{-1}(\fra))$$
(with the convention that this is $\infty$ if the ideal $\gamma^{-1}(\fra)$ is $0$).
With this notation, the \emph{contact locus}
$${\rm Cont}^{\geq m}(W):=\ord_W^{-1}(\geq m)$$
is a closed cylinder, hence
$${\rm Cont}^m(W):=\ord_W^{-1}(m)={\rm Cont}^{\geq m}(W)\smallsetminus {\rm Cont}^{\geq (m+1)}(W)$$
is a locally closed cylinder. In fact, we have
$${\rm Cont}^{\geq (m+1)}(W)=\pi_m^{-1}(J_m(W)).$$
In particular, we have 
$$\codim({\rm Cont}^{\geq (m+1)}(W))=(m+1)n-\dim(J_m(W)).$$

The main point of the correspondence we are going to describe is that divisorial valuations correspond to cylinders in such a way that the log discrepancy function
translates to the codimension of the cylinder. In order to simplify the exposition, let us assume that $X=\Spec(R)$ is affine.
Note first that if $C$ is an irreducible closed cylinder in $J_{\infty}(X)$, then we may define a map 
$$\ord_C\colon R\to \ZZ_{\geq 0}\cup \{\infty\},\,\,\ord_C(f):=\min\{\ord_{V(f)}(\gamma)\mid \gamma\in C\}.$$
This satisfies 
$$\ord_C(f+g)\geq\min\{\ord_C(f),\ord_C(g)\}\,\,\text{and}\,\,\ord_C(fg)=\ord_C(f)+\ord_C(g).$$
Moreover, if $C$ does not dominate $X$, then $\ord_C(f)<\infty$ for every nonzero $f$, hence $\ord_C$ extends to a valuation of the function field of $X$.
The following is the main result from \cite{EinLazarsfeldMustata} concerning the description of divisorial valuations in terms of cylinders in the space or arcs.
For an extension to singular varieties, see \cite{dFEI}.

\begin{theorem}\label{ELM}
Let $X$ be a smooth variety.
\begin{enumerate}
\item[i)] If $C$ is an irreducible closed cylinder in $J_{\infty}(X)$ that does not dominate $X$, then
there is a divisor $E$ over $X$ and a positive integer $q$ such that $\ord_C=q\cdot\ord_E$.
\item[ii)] For every divisor $E$ over $X$ and every positive integer $q$, there is a closed irreducible
cylinder $C$ in $J_{\infty}(X)$ such that $\ord_C=q\cdot\ord_E$. Moreover, there is a unique maximal such
cylinder $C=C_q(E)$, with respect to inclusion, and
$$\codim(C_{q}(E))=q\cdot A(\ord_E).$$
\end{enumerate}
\end{theorem}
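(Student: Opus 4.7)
The plan is to prove part (ii) by pushing forward from a birational model where $E$ appears, and to prove part (i) by inverting this procedure: pulling back $C$ to a sufficiently fine model and extracting a single divisor.

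For part (ii), I would start by choosing a proper birational morphism $\mu\colon Y\to X$ with $Y$ smooth and $E$ a smooth prime divisor on $Y$. Consider the closed irreducible cylinder
\[
C' := \{\widetilde\gamma\in J_\infty(Y)\mid \ord_E(\widetilde\gamma)\geq q\} = \pi_{q-1}^{-1}(J_{q-1}(E\hookrightarrow Y))\subseteq J_\infty(Y),
\]
which has codimension exactly $q$ in $J_\infty(Y)$ because $E$ is a smooth divisor on the smooth variety $Y$. Define $C_q(E)\subseteq J_\infty(X)$ to be the Zariski closure of the image of $C'$ under $J_\infty(\mu)$. To see that $\ord_{C_q(E)} = q\cdot\ord_E$, note that the generic arc of $C'$ meets $E$ transversally at a generic point of $E$ (meeting no other component of any pullback divisor), so for $f\in\cO(X)$ we have $\ord_t(\widetilde\gamma^*\mu^*f) = q\cdot\ord_E(f)$. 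For the codimension formula, the key input is the fiber dimension statement for birational morphisms (a consequence of the change-of-variables formula of \cite{DL99}; see \cite{EM09}): the restriction of $J_\infty(\mu)$ to $C'$ has generic fiber of dimension $q\cdot\ord_E(K_{Y/X})$ at each sufficiently high finite level. Combining,
\[
\codim(C_q(E)) = \codim(C') + q\cdot\ord_E(K_{Y/X}) = q + q\cdot\ord_E(K_{Y/X}) = q\cdot A(\ord_E).
\]
Uniqueness of the maximal such cylinder then follows by a direct comparison: any irreducible closed cylinder $C$ with $\ord_C=q\cdot\ord_E$ has its generic arc lifting through $\mu$ (since it does not dominate $X$ and we can factor through the resolution), and the lift lies in $C'$, forcing $C\subseteq C_q(E)$.

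For part (i), given an irreducible closed cylinder $C$ not dominating $X$, let $\gamma_0$ denote its generic arc, defined over $K=k(C)$. The plan is to construct a birational model $\mu\colon Y\to X$ and a lift $\widetilde\gamma_0\in J_\infty(Y)(K)$ such that the reduction of $\widetilde\gamma_0$ to $Y$ lies in the smooth locus of a single prime divisor $E$ meeting no other component of the exceptional locus at the relevant point. Once such a model is produced, $q := \ord_E(\widetilde\gamma_0)>0$ and for every $f\in\cO(X)$ the pullback $\mu^*f$ has the form $u\cdot(\text{defining equation of }E)^{\ord_E(f)}$ in a neighborhood of the image of the special point of $\widetilde\gamma_0$, giving $\ord_C(f) = q\cdot\ord_E(f)$. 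To construct the model, begin with any log resolution of the ideal $\fra_C\subseteq\cO_X$ cutting out the image of $C$ in $X$, lift $\gamma_0$ using properness, and then iteratively blow up centers where $\widetilde\gamma_0$ concurrently meets several components of the exceptional SNC divisor. Each such blowup strictly reduces the number of distinct components hit by the generic arc (on the new model, the strict transforms of all but at most one of the previously met components are disjoint from the special point of the lifted arc), and this process terminates after finitely many steps since the underlying orders are bounded integers determined by the cylinder data.

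The main obstacle is the termination argument in part (i): one must show that the sequence of blowups described above indeed converges to a model where the generic lifted arc meets a single smooth divisor, and that the output divisor $E$ and multiplicity $q$ genuinely compute $\ord_C$ on all of $\cO(X)$ (not merely on the functions used to define the cylinder at finite level). This requires combining embedded resolution of singularities with the finiteness intrinsic to the cylinder structure, namely that the jet-level data of $C$ determines $\ord_C(f)$ after finitely many truncations, so that only finitely many blowups can alter the picture. Part (ii) is the more straightforward direction and is where the explicit codimension formula emerges naturally from the log discrepancy via the fiber dimension lemma.
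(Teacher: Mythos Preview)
Your proposal is essentially correct, and for part (ii) it matches the paper's approach closely: construct the cylinder by pushing forward a contact locus from a model on which $E$ lives, and read off the codimension from the change-of-variables formula (the paper phrases this as ``if $C$ is a cylinder contained in ${\rm Cont}^e(K_{Y/X})$, then $g(C)$ is again a cylinder and $\codim(g(C))=\codim(C)+e$'').

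For part (i) the paper takes a different route. Rather than iteratively blowing up to separate the components met by the generic lifted arc, the paper (following \cite{EinLazarsfeldMustata}) fixes a single log resolution $f\colon Y\to X$ and shows that every irreducible component of a contact locus ${\rm Cont}^{\geq m}(W)$ is the closure of the image of a \emph{multi-contact locus} $\bigcap_i{\rm Cont}^{\geq\nu_i}(E_i)$ on $Y$; the valuation attached to such a locus is then identified in one step as $q\cdot\ord_E$, where $E$ is the exceptional divisor of a suitable \emph{weighted} blow-up of $Y$ along the simple normal crossing divisor $\sum_iE_i$. Your iterative procedure and the paper's weighted blow-up are two ways of packaging the same computation (a weighted blow-up factors as a sequence of ordinary ones), but the paper's version sidesteps precisely the termination issue you flag: the single weighted blow-up extracts the correct $E$ directly, without needing to argue that a sequence of ordinary blow-ups eventually reduces the number of components the arc meets. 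Incidentally, your claim that each ordinary blow-up strictly reduces that number is not literally true (e.g.\ orders $(2,3)$ along two transverse divisors become $(2,1)$ after one blow-up, still two components); what actually decreases is a Euclidean-algorithm-type quantity, which is exactly what the weighted blow-up encodes in one shot.
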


It is easy to see that this result implies both Theorems~\ref{connection_log_canonical_threshold}
and \ref{connection_mld}. The key step in the proof of Theorem~\ref{ELM} consists in analyzing the
valuations corresponding to the irreducible  components of ${\rm Cont}^{\geq m}(W)$, for a closed subscheme $W$. 
This is done by considering a log resolution $f\colon Y\to X$ of $(X,\fra)$, where $\fra$ is the ideal defining $W$.
In this case, we have an induced map $g=J_{\infty}(f)\colon J_{\infty}(Y)\to J_{\infty}(X)$. If $f$ is an isomorphism over
$X\smallsetminus A$, where $A$ is a proper closed subset of $X$, then the valuative criterion for properness implies 
that $g$ is a bijection over $J_{\infty}(Y)\smallsetminus J_{\infty}(A)$. While $J_{\infty}(A)$ is ``small" in $J_{\infty}(X)$
(for example, if the ground field is uncountable, then no cylinder is contained in $J_{\infty}(A)$), the map $g$
is far from being a homeomorphism over $J_{\infty}(Y)\smallsetminus J_{\infty}(A)$. In fact, it is a fundamental result that if $C$ is a cylinder contained in ${\rm Cont}^e(K_{Y/X})$, then $g(C)$ is again a cylinder and
$$\codim(g(C))=\codim(C)+e.$$
This is a consequence of the geometric statement behind the change of variable formula in motivic integration,
due to \cite{Kontsevich}. For the relevant statement and its proof, as well as for an important generalization to the case when $X$ is not smooth,
we refer to \cite{DL99}.
Since ${\rm Cont}^{\geq m}(f^{-1}(W))=g^{-1}({\rm Cont}^{\geq m}(W))$, one can deduce the following formula for the codimension of
${\rm Cont}^{\geq m}(W)$, see \cite{EinLazarsfeldMustata} for details.

\begin{theorem}\label{formula_contact_loci}
Let $X$ be a smooth variety and $W$ a proper closed subscheme of $X$, defined by the ideal $\fra$.
If $f\colon Y\to X$ is a log resolution of the pair $(X,\fra)$ and we write
$$f^{-1}(W)=\sum_{i=1}^ra_iE_i\,\,\text{and}\,\,K_{Y/X}=\sum_{i=1}^rk_iE_i,$$
then $\codim({\rm Cont}^{\geq m}(W))$ is equal to
$$\min\left\{\sum_{i=1}^r(k_i+1)\nu_i\mid \nu=(\nu_1,\ldots,\nu_r)\in\ZZ_{\geq 0}^r,\bigcap_{\nu_i>0}E_i\neq\emptyset,\sum_{i=1}^ra_i\nu_i\geq m\right\}.$$
\end{theorem}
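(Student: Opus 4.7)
The plan is to pull the computation back along the log resolution $f\colon Y\to X$, where everything is much more transparent because $\sum E_i$ has simple normal crossings, and then push back down using the codimension-change formula from motivic integration. Let $g=J_\infty(f)\colon J_\infty(Y)\to J_\infty(X)$. Since $f$ is an isomorphism over $X\smallsetminus A$ for some proper closed $A\subseteq X$, the map $g$ is a bijection over $J_\infty(X)\smallsetminus J_\infty(A)$, and moreover $\mathrm{Cont}^{\geq m}(f^{-1}(W))=g^{-1}(\mathrm{Cont}^{\geq m}(W))$. In characteristic zero over an algebraically closed (uncountable) ground field, $J_\infty(A)$ contains no cylinder, so it is harmless to discard it when comparing codimensions.

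Next, I would stratify $\mathrm{Cont}^{\geq m}(f^{-1}(W))$ according to the multi-order of contact along the components $E_i$. For $\nu=(\nu_1,\dots,\nu_r)\in\ZZ_{\geq 0}^r$, set
\[
\mathrm{Cont}^{\nu}(E_\bullet):=\{\gamma\in J_\infty(Y)\mid \ord_{E_i}(\gamma)=\nu_i\text{ for all }i\}.
\]
Using SNC local coordinates in which each $E_i$ with $\nu_i>0$ is a coordinate hyperplane, one checks that $\mathrm{Cont}^\nu(E_\bullet)$ is a locally closed cylinder, nonempty iff $\bigcap_{\nu_i>0}E_i\neq\emptyset$, and in that case of codimension $\sum_i\nu_i$ in $J_\infty(Y)$. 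Since $f^{-1}(W)=\sum a_iE_i$, the condition $\gamma\in\mathrm{Cont}^{\geq m}(f^{-1}(W))$ translates, on each stratum, to $\sum_i a_i\nu_i\geq m$.

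Now I invoke the key input from \cite{DL99}: if $C\subseteq\mathrm{Cont}^e(K_{Y/X})$ is a cylinder, then $g(C)$ is a cylinder with $\codim(g(C))=\codim(C)+e$. Because $K_{Y/X}=\sum k_iE_i$, on the stratum $\mathrm{Cont}^\nu(E_\bullet)$ we have $\ord_{K_{Y/X}}=\sum_i k_i\nu_i$ identically, so the formula yields
\[
\codim\bigl(g(\mathrm{Cont}^\nu(E_\bullet))\bigr)=\sum_i\nu_i+\sum_i k_i\nu_i=\sum_i(k_i+1)\nu_i.
\]
Since $\mathrm{Cont}^{\geq m}(W)$ (up to a subset of $J_\infty(A)$, which is codimension-irrelevant) is the union of the images $g(\mathrm{Cont}^\nu(E_\bullet))$ over all admissible $\nu$, its codimension equals the minimum of these numbers, which is precisely the claimed formula.

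The main obstacle is justifying the codimension-change statement $\codim(g(C))=\codim(C)+e$; this is the geometric core of the Denef--Loeser change of variables formula and requires a careful fiber-dimension analysis of $g$ restricted to the strata $\mathrm{Cont}^e(K_{Y/X})$, together with the fact that the images are themselves cylinders (i.e.\ detected at a finite jet level). A secondary technical point is verifying that $J_\infty(A)$ can be discarded in all codimension computations; this is standard once one knows that no cylinder is contained in $J_\infty(A)$ when $A\subsetneq X$. With these two inputs in hand, the stratification and termwise computation above deliver the formula.
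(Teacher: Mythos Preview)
Your proposal is correct and follows essentially the same approach as the paper: pull back along the log resolution, stratify $\mathrm{Cont}^{\geq m}(f^{-1}(W))$ by multi-contact loci along the SNC divisor $\sum E_i$, compute the codimension of each stratum upstairs, and then push down using the Denef--Loeser change-of-variables statement $\codim(g(C))=\codim(C)+e$ for $C\subseteq\mathrm{Cont}^e(K_{Y/X})$. The paper likewise identifies the change-of-variables formula and the negligibility of $J_\infty(A)$ as the two inputs, and refers to \cite{EinLazarsfeldMustata} for the details you have written out.
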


Moreover, it is easy to see that in the setting of this theorem, every irreducible component $C$ of ${\rm Cont}^{\geq m}(W)$ is the closure of the image of a multi-contact locus of the form
$\cap_{i=1}^r{\rm Cont}^{\geq\nu_i}(E_i)$. Using this, one deduces that $\ord_C$ is equal to $q\cdot\ord_E$, where $E$ is the exceptional divisor on a suitable weighted blow-up of $Y$
with respect to the simple normal crossing divisor $\sum_iE_i$. We note that since both log canonical thresholds and minimal log discrepancies can be computed using log resolutions, the statement
of Theorem~\ref{formula_contact_loci} is enough to imply Theorems~\ref{connection_log_canonical_threshold}
and \ref{connection_mld}. On the other hand, one can give a proof for Theorem~\ref{ELM} without using log resolutions, see \cite{Zhu}. An advantage of that approach is that one obtains the
same result in positive characteristic. 

\begin{remark}\label{rmk_monotonicity}
It is an immediate consequence of Theorem~\ref{formula_contact_loci} that if $W$ is any scheme, then
$$\frac{\dim(J_{m-1}(W))}{m}\leq\frac{\dim(J_{mp-1}(W))}{mp}$$
for every positive integers $m$ and $p$. It would be interesting to give a direct proof of this inequality without relying on log resolutions.
Such an argument could then hopefully be extended to cover other jet schemes $J_A(X)$,
for suitable $A$.
\end{remark}

\begin{remark}
It was shown in \cite{Mus02} that one can use the description of the log canonical threshold in Theorem~\ref{connection_log_canonical_threshold}
to reprove some of the basic properties of this invariant. For example, one can use this approach to prove the following special case
of Inversion of Adjunction: if $X$ is a smooth variety and $H$ is a smooth hypersurface in $X$, then for every ideal $\fra$ on $X$ such that
$\fra\cdot\cO_H\neq 0$, there is an open neighborhood $U$ of $H$ such that
\begin{equation}\label{eq_inversion_adjunction}
\lct(\fra\vert_U)\geq\lct(\fra\cdot\cO_H).
\end{equation}
The usual proof of this inequality makes use of vanishing theorems (see for example \cite{Kol97}). 
Since Theorem~\ref{formula_contact_loci} also holds in positive characteristic, one deduces that 
the inequality (\ref{eq_inversion_adjunction}) holds in this setting as well, see \cite{Zhu}, in spite of the fact that
vanishing theorems can fail.
\end{remark}

\begin{remark}
It would be interesting to find a jet-theoretic proof of the following result of Varchenko. Suppose that $T$ is a connected scheme and
${\mathcal W}\hookrightarrow \AA^n\times T$ is an effective Cartier divisor, flat over $T$, such that for every (closed) point $t\in T$, the induced divisor
${\mathcal W}_t\hookrightarrow\AA^n$ has an isolated singularity at $0$. By using the connection between the log canonical threshold and Steenbrink's 
spectrum of a hypersurface singularity, Varchenko showed in \cite{Varchenko1} that if the Milnor number at $0$ for each
${\mathcal W}_t$ is constant for $t\in T$, then
$\lct_0(\AA^n,{\mathcal W}_t)$ is independent of $t$
(here $\lct_0(\AA^n,{\mathcal W}_t)=\lct(U,{\mathcal W}_t\cap U)$, where $U$ is a small neighborhood of $0$).
It would be interesting to deduce this fact from the behavior of jet schemes. 
It is easy to see that the jet schemes $J_m({\mathcal W}_t)$ are the fibers of a relative jet scheme $J_m({\mathcal W}/T)$ over $T$ and
a natural question is whether the constancy of Milnor numbers implies
that this family is flat over $T$ (in a neighborhood of the fiber over $0$ via the natural map $J_m({\mathcal W}/T)\to\AA^n$).
\end{remark}

\begin{remark}
Suppose, for simplicity, that $X=\Spec(R)$ is a smooth affine variety. Recall that if ${\mathfrak a}$ is an ideal in $R$, then its
integral closure $\overline{{\mathfrak a}}$ consists of all $\phi\in R$ such that $\ord_E(\phi)\geq\ord_E({\mathfrak a})$ for all divisors
$E$ over $X$. It follows from Theorem~\ref{ELM} that if ${\mathfrak b}$ is another ideal in $R$, then 
${\mathfrak b}\subseteq \overline{{\mathfrak a}}$ if and only if ${\rm Cont}^{\geq m}({\mathfrak a})\subseteq {\rm Cont}^{\geq m}({\mathfrak b})$
for all $m$. In particular, the integral closure $\overline{{\mathfrak a}}$ is determined by the contact loci $({\rm Cont}^{\geq m}({\mathfrak a}))_{m\geq 1}$.
Given other invariants of an ideal that only depend on the integral closure, it would be interesting to find a direct description of these invariants in terms of the
contact loci of that ideal. For example, if ${\mathfrak a}$ is supported at a point $x\in X$, it would be interesting to find a description of the
Samuel multiplicity $e({\mathfrak a}\cdot {\mathcal O}_{X,x}, {\mathcal O}_{X,x})$ in terms of the contact loci of ${\mathfrak a}$. 
\end{remark} 

We now turn to a related connection between singularities and jet schemes. It turns out that in the case of 
locally complete intersection varieties, one can characterize various classes of singularities in terms of the behavior of the jet schemes.
The following is the main result in this direction.

\begin{theorem}\label{irred_jet_schemes}
Let $W$ be a locally complete intersection variety.
\begin{enumerate}
\item[i)] If $W$ is normal, then $W$ has log canonical singularities if and only if $J_m(W)$ has pure dimension for every $m\geq 0$.
Moreover, in this case we have $\dim(J_m(W))=(m+1)\cdot\dim(X)$ and $J_m(W)$ is a locally complete intersection.
\item[ii)] The variety $W$ has rational (equivalently, canonical) singularities if and only if $J_m(W)$ is irreducible for every $m\geq 0$.
\item[iii)] The variety $W$ has terminal singularities if and only if $J_m(W)$ is normal for every $m\geq 0$. 
\end{enumerate}
\end{theorem}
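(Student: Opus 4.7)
The plan is to derive all three parts from two simple observations about the jet schemes of a locally complete intersection. Embedding $W$ locally in a smooth variety $X$ of dimension $N$ with $\mathrm{codim}(W,X)=r$ and ideal $\fra$, Proposition~\ref{prop_existence_jet_scheme}(iii) tells us that $J_m(W)$ is cut out in $J_m(X)\simeq\AA^{N(m+1)}$ by $r(m+1)$ equations, so every irreducible component of $J_m(W)$ has dimension at least $(m+1)(N-r)=(m+1)\dim(W)$. Corollary~\ref{smooth_case} applied to $W_{\mathrm{reg}}$ then shows that $J_m(W)^\circ:=\overline{(\pi_m^W)^{-1}(W_{\mathrm{reg}})}$ is an irreducible component of dimension exactly $(m+1)\dim(W)$. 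These are the common starting points for all three parts.

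For part (i), I would invoke the fact that for an l.c.i.\ subscheme of codimension $r$ in a smooth variety one has $\lct(X,W)\leq r$, with equality characterising $W$ being log canonical, together with Theorem~\ref{connection_log_canonical_threshold}: the equality $\lct(X,W)=r$ is equivalent to $\dim J_m(W)\leq (m+1)(N-r)$ for every $m$, and combined with the lower bound above this forces $\dim J_m(W)=(m+1)(N-r)$. In that case $J_m(W)$ is a complete intersection of the expected codimension in $\AA^{N(m+1)}$, hence pure-dimensional and itself l.c.i. Conversely, if $J_m(W)$ is pure-dimensional for every $m$, the main component $J_m(W)^\circ$ already has the minimal possible dimension, so every component has this dimension, and one recovers $\lct(X,W)=r$.

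For parts (ii) and (iii), I would pull the components back to the arc space using $\pi_m^{-1}(J_m(W))=\mathrm{Cont}^{\geq m+1}(W)$ and apply Theorem~\ref{formula_contact_loci}: on a log resolution $f\colon Y\to X$ of $(X,\fra)$, the components of $\mathrm{Cont}^{\geq m+1}(W)$ are parametrised by tuples $\nu=(\nu_i)$ minimising $\sum(k_i+1)\nu_i$ subject to $\sum a_i\nu_i\geq m+1$ and $\bigcap_{\nu_i>0}E_i\neq\emptyset$. Using the l.c.i.\ adjunction identity $a(E;W)=A(\mathrm{ord}_E)-r\cdot\mathrm{ord}_E(\fra)-1$ for the discrepancy of $E$ viewed as a divisor over $W$, minimisers other than the one producing $J_m(W)^\circ$ correspond to divisors $E$ with $c_X(E)\subsetneq W$ realising discrepancy $-1$ over $W$; such $E$ are absent exactly when $W$ is canonical, giving (ii). For (iii), once $J_m(W)$ is irreducible and l.c.i.\ (hence Cohen--Macaulay, so Serre's condition $S_2$ is automatic), normality reduces via Serre's criterion to $R_1$; the singular locus of $J_m(W)$ lies over $W_{\mathrm{sing}}$, and Theorem~\ref{connection_mld} applied with $q=r$ identifies its codimension in $J_m(W)$ with a minimal log discrepancy over subvarieties of $W_{\mathrm{sing}}$. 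That codimension is $\geq 2$ for every $m$ iff all discrepancies over $W$ are strictly positive, i.e., $W$ is terminal.

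The main obstacle is the two-way dictionary in (ii)--(iii) between minimising tuples $\nu$ in Theorem~\ref{formula_contact_loci} and individual divisorial valuations over $W$. One direction requires producing, from each bad divisor $E$, a genuine extra component (for (ii)) or a codimension-$1$ singular stratum (for (iii)) of $J_m(W)$ for a suitable $m$; the standard device is to realise $E$ as the exceptional divisor of an appropriate weighted blow-up along a stratum of the s.n.c.\ divisor $\sum E_i$ on $Y$. The opposite direction requires ruling out spurious minimisers that fold back into $J_m(W)^\circ$, which is supplied by the dimension count behind the change of variable formula of \cite{Kontsevich,DL99}. Setting up this dictionary cleanly is where most of the technical work sits.
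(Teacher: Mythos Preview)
The paper does not prove this theorem; it attributes (ii) to \cite{Mus01} (via motivic integration, with \cite{EinLazarsfeldMustata} later supplying the alternative route through Theorem~\ref{ELM}) and (i), (iii) to \cite{EMY} and \cite{EM04}, noting that these require ``some basic Inversion of Adjunction statements''. Your outline is exactly this strategy: reduce via Proposition~\ref{description_equidimensional} to dimension estimates on $(\pi_m^W)^{-1}(W_{\rm sing})$, translate those into conditions on $\mld_{Z}(X,\fra^r)$ via Theorems~\ref{connection_log_canonical_threshold} and \ref{connection_mld}, and then pass to discrepancies of $W$ via adjunction.

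Two points to tighten. First, the identity $a(E;W)=A(\ord_E)-r\cdot\ord_E(\fra)-1$, together with the claim that it accounts for \emph{all} exceptional valuations of $W$, \emph{is} Inversion of Adjunction for locally complete intersections. In \cite{EMY} and \cite{EM04} this is itself established using the same jet-scheme dimension counts you are invoking, so you should be explicit that you take it as an established input; otherwise the argument is circular. Second, in (iii) you conflate the codimension of ${\rm Sing}(J_m(W))$ with that of $(\pi_m^W)^{-1}(W_{\rm sing})$, but you only state the inclusion ${\rm Sing}(J_m(W))\subseteq (\pi_m^W)^{-1}(W_{\rm sing})$. That inclusion suffices for ``terminal $\Rightarrow$ normal''; the converse needs equality. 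This does hold once $J_m(W)$ is a complete intersection of the expected codimension: writing out the equations of $J_m(W)\subset J_m(\AA^N)$, the Jacobian at a jet $\gamma$ is block lower-triangular with every diagonal block equal to the Jacobian of $(f_1,\dots,f_r)$ at $\pi_m(\gamma)$, so it drops rank exactly when $\pi_m(\gamma)\in W_{\rm sing}$. You should insert this lemma. (Your ``discrepancy $-1$'' in (ii) is fine once you observe that $W$ is Gorenstein, so discrepancies are integers and $a(E;W)>-1$ is equivalent to $a(E;W)\geq 0$.)
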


The assertion in ii) was proved in \cite{Mus01} using motivic integration. It was then noticed in \cite{EinLazarsfeldMustata} that the main ingredient
in the proof can also be deduced from Theorem~\ref{ELM}. The result had been conjectured by David Eisenbud and Edward Frenkel. They use it in
the appendix to \cite{Mus01} to give an analogue of a result of Kostant to the setting of loop Lie algebras. The assertions in i) and iii) follow using similar ideas
once some basic Inversion of Adjunction statements are proved, see \cite{EMY} and \cite{EM04}. 

We end with an interpretation for the condition of having pure-dimensional or irreducible jet schemes under the assumptions of 
Theorem~\ref{irred_jet_schemes}. It turns out that these considerations can be made in the context of generalized jet schemes and we will
make use of this in the next section.

\begin{proposition}\label{description_equidimensional}
Let $W$ be an $n$-dimensional locally complete intersection variety and let $A$ be a local, finite $k$-algebra, with $\dim_k(A)=\ell$.
\begin{enumerate}
\item[i)] $J_A(W)$ is pure-dimensional if and only if all irreducible components of $J_A(W)$ have dimension $\ell n$.
If this is the case, then $J_A(W)$ is locally a complete intersection.
\item[ii)] $J_A(W)$ is irreducible if and only if the inverse image in $J_A(W)$ of the singular locus of $W$ has 
dimension $<\ell n$. If this is the case, then $J_A(W)$ is also reduced.
\item[iii)] Suppose that $A\to A'$ is a surjective $k$-algebra homomorphism, where $A$ and $A'$ are finite, local $k$-algebras,
with $A'$ being positively graded. If $J_{A}(W)$ is pure-dimensional or irreducible, then $J_{A'}(W)$ has the same property.
\end{enumerate}
\end{proposition}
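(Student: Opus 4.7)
Part (i). The plan is to work locally, embedding an affine chart of $W$ into $\AA^N$ as a complete intersection cut out by $N-n$ equations. By Proposition~\ref{prop_existence_jet_scheme}(iii), $J_A(W)\hookrightarrow J_A(\AA^N)\simeq \AA^{\ell N}$ is then cut out by $\ell(N-n)$ equations, so the height theorem forces every component of $J_A(W)$ to have dimension at least $\ell n$, with equality giving a local complete intersection structure. Corollary~\ref{smooth_case} supplies the matching upper bound: $J_A(W_{\rm sm})$ is a smooth open subset of $J_A(W)$ of dimension $\ell n$, so the closures of its irreducible components yield components of $J_A(W)$ of exact dimension $\ell n$. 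Pure-dimensionality then forces all components to have this common dimension $\ell n$, and the equation count yields the local complete intersection assertion.

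Part (ii). Since $W$ is irreducible, $W_{\rm sm}$ is irreducible and dense, so by Corollary~\ref{smooth_case} the smooth open $J_A(W_{\rm sm})\subseteq J_A(W)$ is irreducible of dimension $\ell n$. Its closure $Z_0$ is therefore an irreducible component, and any other component of $J_A(W)$ must lie in $(\pi_A^W)^{-1}(W_{\rm sing})$: if its generic point lay in the smooth locus, its whole closure would sit inside $\overline{J_A(W_{\rm sm})}=Z_0$. Combined with (i), this shows that $J_A(W)$ is irreducible if and only if $(\pi_A^W)^{-1}(W_{\rm sing})$ contains no component of $J_A(W)$, which (since $(\pi_A^W)^{-1}(W_{\rm sing})\cap Z_0$ is a proper closed subset of $Z_0$ and hence of dimension $<\ell n$) is equivalent to $\dim(\pi_A^W)^{-1}(W_{\rm sing})<\ell n$. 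When $J_A(W)$ is irreducible, (i) makes it pure-dimensional and a local complete intersection, hence $S_1$; its generic point lies in the smooth open $J_A(W_{\rm sm})$, so it is also $R_0$, and Serre's criterion yields reducedness.

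Part (iii). The backbone is the following dimension estimate. Set $\pi=\pi^W_{A'/A}$ and $\ell'=\dim_k(A')$. I claim that for every irreducible closed $T\subseteq J_{A'}(W)$ with $\pi^{-1}(T)$ nonempty,
\[
\dim\pi^{-1}(T)\;\geq\;\dim T+(\ell-\ell')n.
\]
Indeed, locally embedding $W\subseteq\AA^N$ as in (i), the preimage $(\pi^{\AA^N}_{A'/A})^{-1}(T)$ is a trivial $\AA^{(\ell-\ell')N}$-bundle over $T$, inside which $\pi^{-1}(T)$ is cut out by only $(\ell-\ell')(N-n)$ additional equations by Proposition~\ref{prop_existence_jet_scheme}(iii). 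The main obstacle is securing the nonemptiness of $\pi^{-1}(T)$, and this is precisely where the positive grading on $A'$ enters via Remark~\ref{action}: any $\AA^1$-stable closed subset $T\subseteq J_{A'}(W)$ contains $s^W_{A'}(\pi^W_{A'}(T))$, and these constant $A'$-jets lift tautologically along $\pi$ to the constant $A$-jets $s^W_A(x)\in J_A(W)$.

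Given the estimate, the pure-dimensional case follows by taking $T=Z'$ an irreducible component of $J_{A'}(W)$ (automatically $\AA^1$-stable by Remark~\ref{action}): a hypothetical $\dim Z'>\ell'n$ would give $\dim\pi^{-1}(Z')>\ell n$, contradicting (i) for $J_A(W)$. For the irreducible case, one applies the bound to each component of $S':=(\pi^W_{A'})^{-1}(W_{\rm sing})$, which is $\AA^1$-stable because $\pi^W_{A'}$ is constant along $\AA^1$-orbits; combined with $\dim(\pi_A^W)^{-1}(W_{\rm sing})<\ell n$ from (ii) applied to $J_A(W)$, the bound forces $\dim S'<\ell'n$, and then (ii) applied to $J_{A'}(W)$ delivers irreducibility.
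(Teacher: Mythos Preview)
Your argument is correct and follows essentially the same approach as the paper's proof: the local embedding and equation count for (i), the identification of the unique ``smooth'' component together with a Cohen--Macaulay/Serre-type reducedness argument for (ii), and in (iii) the same dimension inequality coming from Proposition~\ref{prop_existence_jet_scheme}(iii) with nonemptiness supplied by the constant jets via Remark~\ref{action}. The only cosmetic difference is that the paper phrases the key inequality in (iii) as $\dim(\pi_{A'}^W)^{-1}(Z)\leq\dim(\pi_A^W)^{-1}(Z)-(\ell-\ell')n$ for closed $Z\subseteq W$, whereas you state the equivalent lower bound for $\dim\pi^{-1}(T)$ with $T\subseteq J_{A'}(W)$; both unwind to the same computation.
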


\begin{proof}
If we write $W=U_1\cup\ldots\cup U_m$, with each $U_i$ open in $X$, then $J_A(W)=J_A(U_1)\cup\ldots\cup J_A(U_m)$. Using this, it is easy to see
that if the assertions in the proposition hold for each $U_i$, then they hold for $X$. Therefore we may assume 
$W$ is a closed subvariety of $X=\AA^d$, whose ideal is generated by $r=d-n$ elements.

We have seen in Proposition~\ref{prop_existence_jet_scheme} that $J_A(W)$ is cut out in $J_A(X)\simeq\AA^{\ell d}$
by $\ell r$ equations. Therefore each irreducible component of $J_A(X)$ has dimension $\geq \ell n$. On the other hand, it follows from 
Corollary~\ref{smooth_case} that $J_A(X_{\rm sm})$ is an open subset of $J_A(X)$ of dimension $\ell n$. We thus conclude that $J_A(X)$ is pure-dimensional
if and only if all irreducible components of $J_A(X)$ have dimension $\ell n$. We also see that if this is the case, then $J_A(X)$ is itself a locally complete intersection. 
This proves i).

Note also that  $J_A(W)$ is irreducible if and only if the inverse image of the singular locus $W_{\rm sing}$ in $J_A(W)$ has dimension $<\ell n$. If this is the case,
then $J_A(W)$ is generically reduced and being Cohen-Macaulay (recall that $J_A(W)$ has to be a locally complete intersection), it is reduced. This gives ii).

In order to prove iii), let $\ell'=\dim_k(A')$. It is enough to show that if $Z$ is any closed subset of $W$, then 
\begin{equation}\label{eq1_description_equidimensional}
\dim\,(\pi_{A'}^W)^{-1}(Z)\leq \dim\,(\pi_{A}^W)^{-1}(Z)-(\ell-\ell')n.
\end{equation}
It follows from Proposition~\ref{prop_existence_jet_scheme} that 
\begin{equation}\label{eq2}
J_A(W)\hookrightarrow (\pi_{A'/A}^{\AA^d})^{-1}(J_{A'}(W))
\end{equation}
is cut out by $r(\ell-\ell')$ equations and the same holds if we restrict to the inverse images of $Z$.
By putting these together, we obtain the inequality (\ref{eq1_description_equidimensional}) if we can show that for every
irreducible component $R$ of $(\pi_{A'}^W)^{-1}(Z)$, its inverse image $(\pi_{A'/A}^{\AA^d})^{-1}(R)$ intersects $(\pi_A^W)^{-1}(Z)$.
This is a consequence of the fact that if $x\in Z$, then $s_{A}^X(x)$ lies in this intersection by Remark~\ref{action}.
\end{proof}

\section{Some questions on generalized jet schemes}

In this section we collect some questions and remarks concerning the behavior of the schemes $J_A(X)$, when ${\rm embdim}(A)\geq 2$.
Very little is known in this context, partly due to a lack of examples. In what follows we work over an algebraically closed field
$k$ of characteristic zero.

Motivated by Theorem~\ref{connection_log_canonical_threshold}, we begin by proposing several invariants that measure the rate of growth 
of the dimensions of certain schemes $J_A(X)$. In order to simplify the notation, we restrict to the first unknown case, that when ${\rm embdim}(A)=2$.
We introduce three invariants, depending on the choice of algebras $A$.

We first consider the algebras $A_{p,q}=k[s,t]/(s^p,t^q)$, with $p,q\geq 1$. Note that $\dim_k(A)=pq$. Given a scheme $X$, let
$$\alpha_{p,q}(X):=\dim J_{A_{p,q}}(X)$$ and
$$\alpha(X):=\sup_{p,q\geq 1}\frac{\alpha_{p,q}(X)}{pq}.$$
Note that $\alpha(X)\leq\max_{x\in X}\dim(T_xX)$ by Corollary~\ref{bound_dimension_jet_schemes}.
Since $J_{A_{p,q}}(X)\simeq J_{p-1}(J_{q-1}(X))$, it follows from Remark~\ref{rmk_monotonicity} that for every positive integer $m$, we have
\begin{equation}\label{eq1_gen_invariants}
\frac{\alpha_{p,q}(X)}{pq}\leq \frac{\alpha_{mp,q}(X)}{mpq}\,\,\text{and}\,\,\frac{\alpha_{p,q}(X)}{pq}\leq\frac{\alpha_{p,mq}(X)}{mpq}.
\end{equation}
This clearly implies
\begin{equation}\label{eq3_gen_invariants}
\alpha(X)=\sup_{p\geq 1}\frac{\alpha_{p,p}(X)}{p^2}=\limsup_{p\to\infty}\frac{\alpha_{p,p}(X)}{p^2}.
\end{equation}
On the other hand, it follows from Theorem~\ref{connection_log_canonical_threshold} that if $X$ is a closed subscheme of the smooth variety $Y$, then for every $q\geq 1$, we have
\begin{equation}\label{eq2_gen_invariants}
\lim_{p\to\infty}\frac{\alpha_{p,q}(X)}{pq}=\sup_{p\geq 1}\frac{\alpha_{p,q}(X)}{pq}=\dim(Y)-\frac{\lct(J_{q-1}(Y),J_{q-1}(X))}{q}.
\end{equation}
It is easy to deduce from (\ref{eq1_gen_invariants}) and (\ref{eq2_gen_invariants}) the following proposition.

\begin{proposition}
If $X$ is a closed subscheme of the smooth variety $Y$, then
$$\alpha(X)=\dim(Y)-\inf_{q\geq 1}\frac{\lct(J_{q-1}(Y),J_{q-1}(X))}{q}=\dim(Y)-\liminf_{q\to\infty}\frac{\lct(J_{q-1}(Y),J_{q-1}(X))}{q}.$$
\end{proposition}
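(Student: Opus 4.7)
The plan is to assemble the proposition essentially from the three identities (\ref{eq1_gen_invariants}), (\ref{eq2_gen_invariants}), and (\ref{eq3_gen_invariants}) stated immediately before it. The key observation is that the double supremum defining $\alpha(X)$ can be split as a sup over $q$ of an inner sup over $p$, and the latter is computed in closed form by (\ref{eq2_gen_invariants}).

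For the first equality, I would interchange the order of the suprema:
\[
\alpha(X)=\sup_{p,q\geq 1}\frac{\alpha_{p,q}(X)}{pq}=\sup_{q\geq 1}\sup_{p\geq 1}\frac{\alpha_{p,q}(X)}{pq},
\]
and then substitute (\ref{eq2_gen_invariants}) for the inner supremum to obtain
\[
\alpha(X)=\sup_{q\geq 1}\left(\dim(Y)-\frac{\lct(J_{q-1}(Y),J_{q-1}(X))}{q}\right)=\dim(Y)-\inf_{q\geq 1}\frac{\lct(J_{q-1}(Y),J_{q-1}(X))}{q}.
\]

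For the second equality, set $\gamma_q:=\lct(J_{q-1}(Y),J_{q-1}(X))/q$; the task reduces to showing $\inf_{q\geq 1}\gamma_q=\liminf_{q\to\infty}\gamma_q$. The inequality $\inf_q\gamma_q\leq\liminf_q\gamma_q$ holds for any sequence of real numbers. For the reverse, I would extract the monotonicity $\gamma_{mq_0}\leq\gamma_{q_0}$ for all $m,q_0\geq 1$ by taking the second half of (\ref{eq1_gen_invariants}), dividing appropriately, and letting $p\to\infty$ in the light of (\ref{eq2_gen_invariants}). With this in hand, for any fixed $q_0$ the subsequence $(\gamma_{mq_0})_{m\geq 1}$ lies below $\gamma_{q_0}$, and since the $\liminf$ of a subsequence dominates that of the full sequence,
\[
\liminf_{n\to\infty}\gamma_n\leq\liminf_{m\to\infty}\gamma_{mq_0}\leq\gamma_{q_0}.
\]
Taking the infimum over $q_0$ produces $\liminf_n\gamma_n\leq\inf_{q_0}\gamma_{q_0}$, completing the argument.

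The proof is almost entirely formal once the two ingredients are isolated, so there is no substantive obstacle. The only mildly delicate point is verifying that the discrete inequality $\alpha_{p,q}(X)/(pq)\leq\alpha_{p,mq}(X)/(mpq)$ in (\ref{eq1_gen_invariants}) transfers, after passage to the limit in $p$ via (\ref{eq2_gen_invariants}), to the monotonicity $\gamma_{mq}\leq\gamma_q$ that drives the second equality; everything else reduces to elementary manipulations with suprema and limits inferior of bounded real sequences.
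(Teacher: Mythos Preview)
Your proof is correct and follows exactly the approach the paper indicates: the paper simply asserts that the proposition is easy to deduce from (\ref{eq1_gen_invariants}) and (\ref{eq2_gen_invariants}), and your argument fills in precisely those details. One minor remark: you cite (\ref{eq3_gen_invariants}) in your opening sentence, but your argument never actually uses it (nor does it need to), consistent with the paper's own attribution.
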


We now turn to another invariant, corresponding to a different sequence of algebras. For every $m\geq 1$, let $A_m=k[s,t]/(s,t)^m$.
Note that $\dim_k(A_m)=\frac{m(m+1)}{2}$. For a scheme $X$ over $k$, we put $\beta_m:=\dim(J_{A_m}(X))$ and
\begin{equation}\label{eq4_gen_invariants}
\beta(X):=\sup_{m\geq 1}\frac{\beta_m(X)}{m(m+1)/2}.
\end{equation}
It follows from Corollary~\ref{bound_dimension_jet_schemes} that $\beta(X)\leq \max_{x\in X}\dim(T_xX)$.

\begin{question}
Is the supremum in (\ref{eq4_gen_invariants}) also the limsup of the corresponding sequence? Is there any relation between $\alpha(X)$ and $\beta(X)$?
\end{question}

\begin{example}
The invariant $\beta(X)$ is slightly easier to compute than $\alpha(X)$. For example, suppose that $X$ is defined in ${\mathbf A}^n=\Spec k[x_1,\ldots,x_n]$
by $x_1^{a_1}\ldots x_n^{a_n}$, for nonnegative integers $a_1,\ldots,a_n$, not all of them equal to $0$. An element of $J_{A_m}(X)$ corresponds to
a $k$-algebra homomorphism $\phi\colon k[x_1,\ldots,x_n]\to k[s,t]/(s,t)^m$, which is determined by $\phi(x_1),\ldots,\phi(x_n)$, such that $\prod_i\phi(x_i)^{a_i}=0$.
If we denote by $\nu_i$ the smallest power of $(s,t)$ that contains $\phi(x_i)$, we see that we get 
a disjoint decomposition of $J_{A_m}(X)$ into locally closed subsets $J_{A_m}(X)_{\nu}$, parametrized by $\nu=(\nu_1,\ldots,\nu_n)
\in\{0,1,\ldots,m\}^n$ such that $\sum_{i=1}^na_i\nu_i\geq m$. It is straightforward to see that
$$\dim(J_{A_m}(X)_{\nu})=\frac{nm(m+1)}{2}-\sum_{i=1}^n\frac{\nu_i(\nu_i+1)}{2}$$
and an easy computation shows that since $\sum_{i=1}^na_i\nu_i\geq m$, we have
\begin{equation}\label{eq5_gen_invariants}
\sum_{i=1}^n\nu_i(\nu_i+1)-\frac{m(m+1)}{\sum_{i=1}^na_i^2}\geq 0.
\end{equation}
Moreover, if we take $\nu_i=a_i\ell$ for some $\ell\geq 1$ and $m=\ell\cdot\sum_ia_i^2$, then
the left-hand side of (\ref{eq5_gen_invariants}) is equal to $\ell\left(\sum_{i=1}^na_i-1\right)$.
This implies that
$$\beta(X)=\limsup_{m\to\infty}\frac{\beta_m(X)}{m(m+1)/2}=n-\frac{1}{\sum_{i=1}^na_i^2}.$$
\end{example}

Yet another invariant of a similar flavor is the following. If $X$ is a scheme over $k$, then let
$$\gamma(X):=\sup_A\frac{\dim(J_A(X))}{\dim_k(A)},$$
where the supremum is over all algebras $A\in {\rm LFA}/k$ which are graded\footnote{Of course, it might make sense to remove the condition that $A$ is graded. We do not know whether 
this would give a different invariant.} and such that ${\rm embdim}(A)\leq 2$. It is clear from definition that $\alpha(X),\beta(X)\leq\gamma(X)$, while
Corollary~\ref{bound_dimension_jet_schemes} implies that $\gamma(X)\leq \max_{x\in X}\dim(T_xX)$.

We now turn to a different question, motivated by Theorem~\ref{irred_jet_schemes} and inspired by \cite{ISW12}. 
Suppose that $X$ is a locally complete intersection variety and $r$ is a positive integer. We say that $X$ has irreducible (resp. pure-dimensional) $r$-iterated jet schemes
if the jet scheme $J_{m_1}(J_{m_2}(\ldots J_{m_r}(X)))$ is irreducible (resp. pure-dimensional) for all nonnegative integers $m_1,\ldots,m_r$.

\begin{proposition}\label{prop_IJS}
For a locally complete intersection variety $X$, the following are equivalent:
\begin{enumerate}
\item[i)] $X$ has irreducible (resp. pure-dimensional) $r$-iterated jet schemes.
\item[ii)] $J_A(X)$ is irreducible (resp. pure-dimensional) for every $A\in {\rm LFA}/k$ which is graded and such that
 ${\rm embdim}(A)\leq r$.
\item[iii)] $J_{A_m}(X)$ is irreducible (resp. pure-dimensional) for every $m\geq 1$, where
$A_m=k[x_1,\ldots,x_r]/(x_1,\ldots,x_r)^m$.
\end{enumerate}
Furthermore, $X$ has irreducible $r$-iterated jet schemes if and only if all $(r-1)$-iterated jet schemes of $X$ have rational singularities. 
If this is the case, then all $r$-iterated jet schemes of $X$ are reduced locally complete intersections.
\end{proposition}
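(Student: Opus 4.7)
The plan is to derive the equivalence of (i), (ii), and (iii) from Proposition~\ref{description_equidimensional}~iii), which lets us transfer irreducibility or pure-dimensionality along a surjection $A\to A'$ whenever the target is positively graded, combined with the identification in Example~\ref{ex_iterated} of iterated jet schemes as schemes of $A$-jets for suitable truncated polynomial algebras.

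For (i)$\Rightarrow$(ii), given a graded $A\in{\rm LFA}/k$ with ${\rm embdim}(A)\leq r$, I would pick homogeneous generators of the maximal ideal (at most $r$ of them) and use that $A$ is Artinian to produce, for some $N\gg 0$, a graded surjection from $B_N:=k[x_1,\ldots,x_r]/(x_1^N,\ldots,x_r^N)$ onto $A$. By Example~\ref{ex_iterated}, $J_{B_N}(X)$ is the $r$-iterated jet scheme $J_{N-1}(J_{N-1}(\cdots J_{N-1}(X)))$, so (i) makes it irreducible (resp.~pure-dimensional), and Proposition~\ref{description_equidimensional}~iii) transfers the property to $J_A(X)$. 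The implication (ii)$\Rightarrow$(iii) is immediate since each $A_m$ is graded of embedding dimension $\leq r$. For (iii)$\Rightarrow$(i), given $m_1,\ldots,m_r\geq 0$ and any $M>\sum_i m_i$, the inclusion $(t_1,\ldots,t_r)^M\subseteq(t_1^{m_1+1},\ldots,t_r^{m_r+1})$ in $k[t_1,\ldots,t_r]$ yields a graded surjection $A_M\to B':=k[t_1,\ldots,t_r]/(t_1^{m_1+1},\ldots,t_r^{m_r+1})$; Example~\ref{ex_iterated} identifies $J_{B'}(X)$ with $J_{m_1}(\cdots J_{m_r}(X))$, and Proposition~\ref{description_equidimensional}~iii) delivers the conclusion.

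For the remaining two assertions, I would induct on $r$, proving simultaneously that (a) $X$ has irreducible $r$-iterated jet schemes if and only if all $(r-1)$-iterated jet schemes of $X$ have rational singularities, and (b) in that case every $r$-iterated jet scheme is a reduced locally complete intersection. The base case $r=1$ of (a) is Theorem~\ref{irred_jet_schemes}~ii); for (b) at $r=1$, irreducibility forces pure-dimensionality, hence the locally complete intersection conclusion of Theorem~\ref{irred_jet_schemes}~i), while Proposition~\ref{description_equidimensional}~ii) supplies reducedness. In the inductive step, irreducibility at level $r$ forces irreducibility at level $r-1$, so by (b) at level $r-1$ each $(r-1)$-iterated jet scheme $Y$ is a reduced locally complete intersection variety; Theorem~\ref{irred_jet_schemes}~ii) applied to $Y$ then upgrades the irreducibility of all $J_{m_1}(Y)$ to rational singularities on $Y$. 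Conversely, if every $(r-1)$-iterated jet scheme has rational singularities, it is in particular an irreducible variety, and by (b) at level $r-1$ it is reduced locally complete intersection; Theorem~\ref{irred_jet_schemes}~ii) then supplies the required irreducibility at level $r$. Statement (b) at level $r$ follows from Theorem~\ref{irred_jet_schemes}~i) and Proposition~\ref{description_equidimensional}~ii) exactly as in the base case, now applied at each step of the iteration.

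The main obstacle I anticipate is the bookkeeping in this induction: at each level one must verify that the ambient $(r-1)$-iterated jet scheme really is locally complete intersection before invoking Theorem~\ref{irred_jet_schemes}~ii), since that hypothesis is essential and is not guaranteed a priori by the presence of rational singularities on an otherwise unknown scheme. This forces statements (a) and (b) to be carried along together rather than one after the other.
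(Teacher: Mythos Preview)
Your proposal is correct and follows exactly the route the paper indicates: the paper states only that the equivalence of i)--iii) is ``an easy consequence of Proposition~\ref{description_equidimensional}'' and that the final assertions follow from this together with Theorem~\ref{irred_jet_schemes}, and you have accurately reconstructed the intended argument, including the surjections $B_N\twoheadrightarrow A$ and $A_M\twoheadrightarrow B'$ needed to invoke Proposition~\ref{description_equidimensional}~iii). Your induction for the last two assertions, carrying statements (a) and (b) together so that the locally complete intersection hypothesis of Theorem~\ref{irred_jet_schemes} is available at each stage, is precisely the bookkeeping the paper suppresses.
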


The equivalence of i)-iii) is an easy consequence of Proposition~\ref{description_equidimensional},
which together with Theorem~\ref{irred_jet_schemes} also gives the last assertions in the proposition.
When $r\geq 2$, it is not easy to give examples of singular varieties which have all $r$-iterated jet schemes
pure-dimensional. We discuss below the case of cones over smooth projective hypersurfaces, which provides
the only nontrivial class of examples for $r=2$.

\begin{remark}
Note that if $X$ is a locally complete intersection variety, then it follows from Proposition~\ref{prop_IJS} that $X$ has pure-dimensional 2-iterated jet schemes
if and only if $\alpha(X)=\dim(X)$, which is also equivalent to saying that either $\beta(X)=\dim(X)$ or that $\gamma(X)=\dim(X)$. 
\end{remark}

\begin{remark}
We note that Example~\ref{example1} implies that if $x\in X$ is a singular point such that
$$r>\frac{\dim(X)}{\dim(T_xX)-\dim(X)},$$
then $X$ does not have pure-dimensional $r$-iterated jet schemes.
\end{remark}

Suppose now that $X\subseteq \AA^n$, with $n\geq 3$, is defined by a homogeneous polynomial $f$ of degree $d>0$. We assume that $X\smallsetminus \{0\}$
is smooth, which is the case for general $f$. It is well-known that $X$ has rational (log canonical) singularities if and only if $d<n$ (resp., $d\leq n$). Using Theorem~\ref{irred_jet_schemes},
we conclude that $X$ has irreducible (pure-dimensional) 1-iterated jet schemes if and only if $d<n$ (resp., $d\leq n$). 
We give a direct argument for this, in the spirit of \cite[\S 3]{dFEM03}.

\begin{example}\label{jet_scheme_homogeneous_hypersurface}
We show that if $X=V(f)\subset \AA^n$, with $f$ homogeneous of degree $d>0$, such that $X$ has an isolated singularity at $0$,
then $J_m(X)$ is irreducible (pure-dimensional) for all $m\geq 1$ if and only if $d<n$ (resp., $d\leq n$).
Let us denote by $\pi_m\colon J_m(X)\to X$ and  $\pi'_m\colon J_m(\AA^n)\to\AA^n$ the canonical projections.
It follows from Proposition~\ref{description_equidimensional} that we need to show the following:
we have $\dim(\pi_m^{-1}(0))<(m+1)(n-1)$ (resp., 
$\dim(\pi_m^{-1}(0))\leq (m+1)(n-1)$) for all $m\geq 1$
if and only if $d<n$ (resp., $d\leq n$). 
We use the following two facts:
\begin{enumerate}
\item[i)] If $m\leq (d-1)$, then the closed embedding $J_m(X)\hookrightarrow
J_m(\AA^n)$ induces an isomorphism $\pi_m^{-1}(0)\simeq{\pi'_m}^{-1}(0)\simeq\AA^{mn}$.
\item[ii)] If $m\geq d$, then we have an isomorphism $\pi_m^{-1}(0)\simeq J_{m-d}(X)
\times\AA^{n(d-1)}$.
\end{enumerate}
Both assertions follow from the universal property defining $J_m(X)$, together with the following observations: if $R$ is a $k$-algebra and $u_1,\ldots,u_n\in tR[t]/(t^{m+1})$, then
$f(u_1,\ldots,u_n)=0$ whenever $m\leq d-1$; for $m\geq d$, we have
$f(u_1,\ldots,u_n)=0$ if and only if when we write $u_i=tv_i$, we have
$f(\overline{v_1},\ldots,\overline{v_n})=0$ in $R[t]/(t^{m+1-d})$, where
$\overline{v_i}$ is the class of $v_i$ in $R[t]/(t^{m+1-d})$.

In particular, it follows from i) that $\dim(\pi_{d-1}^{-1}(0))=(d-1)n$. If
$\dim(\pi_{d-1}^{-1}(0))<d(n-1)$, we deduce that $d<n$.
Conversely, suppose that $d<n$. We prove by induction on $m$ that
$\dim(\pi_m^{-1}(0))<(m+1)(n-1)$. If $0\leq m\leq d-1$, this follows easily from i).
On the other hand,  if $m\geq d$, then the assertion follows from ii) and the inductive hypothesis. One similarly shows that $d\leq n$ if and only if $\dim(\pi_m^{-1}(0))\leq (m+1)(n-1)$ for all $m$.
\end{example}

With the above notation, we are interested in when the $r$-iterated jet schemes of $X$ are irreducible or pure-dimensional for $r\geq 2$. 
It is easy to give a necessary condition, arguing as in Example~\ref{jet_scheme_homogeneous_hypersurface}.

\begin{proposition}\label{prop_jets_homog_hyp}
Let $X\subset\AA^n$  be defined by a homogeneous polynomial $f$ of degree $d>0$. If $r\geq 2$ and the $r$-iterated jet schemes of 
$X$ are pure-dimensional, then $d^r\leq n$.
\end{proposition}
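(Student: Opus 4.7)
The plan is to mimic the argument of Example~\ref{jet_scheme_homogeneous_hypersurface}, where the isomorphism $\pi_{d-1}^{-1}(0)\simeq\AA^{(d-1)n}$ together with pure-dimensionality of $J_{d-1}(X)$ forces $(d-1)n\leq d(n-1)$, i.e., $d\leq n$. For $r$-iterated jets, I would apply Proposition~\ref{prop_IJS} to replace the hypothesis by pure-dimensionality of $J_A(X)$ for every graded $A\in{\rm LFA}/k$ of embedding dimension at most $r$, and then analyze the fiber of $J_A(X)\to X$ over $0\in X$ for a carefully chosen such $A$.

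The key observation is that for any graded $A$ with $\frm_A^d=0$, an $A$-jet $\phi$ with $\phi(x_i)\in\frm_A$ automatically satisfies $f(\phi(x_1),\ldots,\phi(x_n))\in\frm_A^d=0$, so the fiber $(\pi_A^X)^{-1}(0)$ contains the affine space $\frm_A^n\simeq\AA^{n(\dim_kA-1)}$. Pure-dimensionality of $J_A(X)$, which has dimension $(n-1)\dim_kA$, then yields $n(\dim_kA-1)\leq(n-1)\dim_kA$, i.e., $\dim_kA\leq n$. A first application with $A=k[s_1,\ldots,s_r]/(s_1,\ldots,s_r)^d$ already gives a necessary bound of this form, although only the weaker estimate where $\dim_kA$ is the number of monomials of degree less than $d$ in $r$ variables.

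To reach the sharp bound $d^r\leq n$, I would instead take $A=k[s_1,\ldots,s_r]/(s_1^d,\ldots,s_r^d)$, which is graded of embedding dimension $r$ and satisfies $\dim_kA=d^r$; since $\frm_A^d\neq 0$ in this case the direct argument above fails, and the plan is to identify $J_A(X)$ with the $r$-fold iterated jet scheme $J_{d-1}(J_{d-1}(\cdots J_{d-1}(X)))$ and iterate the recursive fiber description $\pi_m^{-1}(0)\simeq J_{m-d}(X)\times\AA^{n(d-1)}$ of fact~(ii) from Example~\ref{jet_scheme_homogeneous_hypersurface} at each of the $r$ levels. This should cut out a closed subscheme of $(\pi_A^X)^{-1}(0)$ of dimension $n(d^r-1)$, at which point pure-dimensionality of $J_A(X)$ of dimension $(n-1)d^r$ forces $n(d^r-1)\leq(n-1)d^r$, i.e., $d^r\leq n$. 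The main obstacle is that the intermediate iterated jet schemes $J_{d-1}(\cdots J_{d-1}(X))$ are no longer homogeneous hypersurfaces but complete intersections of several homogeneous equations of degree $d$, so at each stage one needs the appropriate generalization of fact~(ii) and must assemble the resulting affine-space descriptions consistently across the $r$ levels; this bookkeeping (together with the passage $m=d-1$ to handle the boundary of the recursion) is the delicate part of the argument.
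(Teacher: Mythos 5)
Your first step is sound and is in fact the opening move of the paper's proof: for a graded $A$ with $\frm_A^d=0$, the locus in $J_A(X)$ where all $\phi(x_i)$ lie in $\frm_A$ is an affine space of dimension $n(\dim_kA-1)$ on which $f$ vanishes for free, and pure-dimensionality (via Proposition~\ref{description_equidimensional}) forces $\dim_kA\leq n$; with $A=k[s_1,\ldots,s_r]/(s_1,\ldots,s_r)^d$ this yields ${{d+r-1}\choose{r}}\leq n$, which is indeed weaker than $d^r\leq n$. The proposed upgrade, however, has a genuine gap: with $A=k[s_1,\ldots,s_r]/(s_1^d,\ldots,s_r^d)$ there is in general no closed subscheme of $(\pi_A^X)^{-1}(0)$ of dimension $n(d^r-1)$, so the inequality you want cannot be set up. Iterating facts i)/ii) of Example~\ref{jet_scheme_homogeneous_hypersurface} level by level produces only the locus where the $\phi(x_i)$ lie in the principal ideal generated by the innermost variable; this is an affine space of dimension $n\,d^{r-1}(d-1)=n(d^r-d^{r-1})$, and comparing with $(n-1)d^r$ gives back only $d\leq n$. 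Reaching dimension $n(d^r-1)$ would require all of $\frm_A^{\,n}$ to lie in $J_A(X)$, i.e., a codimension-one subspace $V\subset A$ all of whose $d$-fold products vanish, and $\frm_A^d\neq 0$ is exactly the obstruction: the coefficients of the monomials $s_1^{a_1}\cdots s_r^{a_r}$ with $\sum_ia_i\geq d$ in $f(\phi(x))$ are genuinely nontrivial equations on $\frm_A^{\,n}$. The result of \cite{ISW12} makes the failure concrete: for very general $f$ with $r=2$ and $n=d^2$, $J_A(X)$ is irreducible of dimension $(n-1)d^2=n(d^2-1)$, so the fiber over $0$, being a proper closed subset, has dimension strictly less than $n(d^2-1)$.

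The idea you are missing is asymptotic. The paper keeps your first family of algebras but lets the socle degree grow: it takes $A_j=k[t_1,\ldots,t_r]/(t_1,\ldots,t_r)^{jd}$ and the subspace $V_j=(t_1,\ldots,t_r)^j$, whose $d$-fold products land in $(t_1,\ldots,t_r)^{jd}=0$, so that $J_{A_j}(X)$ contains a closed subscheme of dimension $n\left({{jd+r-1}\choose{r}}-{{j+r-1}\choose{r}}\right)$. Pure-dimensionality then gives ${{jd+r-1}\choose{r}}\leq n\,{{j+r-1}\choose{r}}$, i.e., $n\geq d\cdot\prod_{i=1}^{r-1}\frac{jd+i}{j+i}$, and since each factor increases to $d$ as $j\to\infty$, one concludes $d^r\leq n$. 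The sharp bound is thus not extracted from any single algebra but only in the limit over this family; if you want to salvage your write-up, replace the second half by this limiting argument.
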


\begin{proof}
For every positive integer $j$, we consider $A_j=k[t_1,\ldots,t_r]/(t_1,\ldots,t_r)^{jd}$.
Note that for every $k$-algebra $R$ and every $u_1,\ldots,u_n\in 
(t_1,\ldots,t_r)^{j}/(t_1,\ldots,t_r)^{jd}\subset R\otimes_k A_j$, we have
$f(u_1,\ldots,u_n)=0$. This shows that $J_{A_j}(X)$ contains a closed subscheme $Z_j$,
with
$$\dim(Z_j)=n\cdot \dim_k (t_1,\ldots,t_r)^jA_j=n\left({{jd+r-1}\choose {r}}-{{j+r-1}\choose {r}}\right).$$
Using Proposition~\ref{description_equidimensional} and the fact that $\dim_k(A_j)={{jd+r-1}\choose{r}}$,
we deduce from our assumption that
$$
n\left({{jd+r-1}\choose {r}}-{{j+r-1}\choose {r}}\right)\leq(n-1)\cdot {{jd+r-1}\choose{r}}.
$$
This gives
\begin{equation}\label{eq1_proof}
{{jd+r-1}\choose{r}}\leq n\cdot {{j+r-1}\choose {r}},
\end{equation}
which we can rewrite as 
\begin{equation}\label{eq2_proof}
n\geq d\cdot \prod_{i=1}^{r-1}\frac{jd+i}{j+i}.
\end{equation}
Since each function $\phi_i(x)=\frac{dx+i}{x+i}$, with $1\leq i\leq r-1$, is increasing, with 
$\lim_{x\to\infty}\phi_i(x)=d$, we conclude from (\ref{eq2_proof}) by letting $j$ go to infinity
that $n\geq d^r$.
\end{proof}

\begin{remark}
If in the above proposition we assume instead that the $r$-iterated jet schemes of $X$ are irreducible, then in 
(\ref{eq2_proof}) we get strict inequality. However, this does not imply that $n>d^r$. 
\end{remark}

\begin{question}\label{last_question}
Does the converse to the assertion in Proposition~\ref{prop_jets_homog_hyp} hold when $X$ has an isolated singularity?
More precisely, suppose that $X$ is defined in $\AA^n$ by a homogeneous polynomial $f$ of degree $d$, such that $X\smallsetminus\{0\}$ is smooth.
If $r\geq 2$ is such that $n\geq d^r$, are all $r$-iterated jet schemes of $X$ pure-dimensional? Are they irreducible? Do these assertions 
hold if $f$ is general?
\end{question}

\begin{remark}
The only evidence for a positive answer to Question~\ref{last_question} is provided by the result from \cite{ISW12}, saying that 
if $f=\sum_ua_ux^u\in k[x_1,\ldots,x_n]$, where the sum is over all $u=(u_1,\ldots,u_n)\in{\mathbf Z}_{\geq 0}^n$ with $\sum_iu_i=d$,
then if $d^2\leq n$ and the coefficients $(a_u)_u$ are algebraically independent over ${\mathbf Q}$, then the $2$-iterated jet schemes
of $X=V(f)$ are irreducible. The proof shows that all $J_m(X)$ have rational singularities by reducing to positive characteristic and using 
the theory of $F$-singularities. In particular, this shows that if the ground field $k$ is uncountable, then for a very general polynomial, the
$2$-iterated jet schemes of $V(f)$ are irreducible. 
\end{remark}

\begin{remark}
It is interesting that the bound in Question~\ref{last_question} is (almost) the same as the bound that shows up in Lang's $C_r$ condition on fields. Recall that if
$r\geq 0$, then a field $K$ satisfies the condition $C_r$ if every homogeneous polynomial $f\in K[x_1,\ldots,x_n]$ of degree $d$, with $d^r<n$,
has a nontrivial zero in $K^n$. For example, if $k$ is algebraically closed, then it is known that the field $K=k(x_1,\ldots,x_r)$ satisfies condition $C_r$. 
It would be interesting if there was a connection between $C_r$ fields and Question~\ref{last_question}.
\end{remark}

\subsection*{Acknowledgments}
I am indebted to David Eisenbud and Edward Frenkel who introduced me to jet schemes and shared with me their conjecture on the
irreducibility of jet schemes of locally complete intersection varieties. I am also grateful to Lawrence Ein, from whom I learned a great deal over the years,
during our collaboration. In particular, several of the results discussed in this article have been obtained in joint work with him.


\end{document}